\newtheorem{theorem}{Theorem}[section]
\newtheorem{lemma}[theorem]{Lemma}
\newtheorem{prop-def}{Proposition-Definition}[section]
\newtheorem{coro-def}{Corollary-Definition}[section]
\theoremstyle{definition}
\newtheorem{defn}[theorem]{Definition}
\newtheorem{remark}[theorem]{Remark}
\newtheorem{exam}[theorem]{Example}
\newcommand{\nc}{\newcommand}
\nc{\tred}[1]{\textcolor{red}{#1}}
\nc{\tblue}[1]{\textcolor{blue}{#1}}
\nc{\tgreen}[1]{\textcolor{green}{#1}}
\nc{\tpurple}[1]{\textcolor{purple}{#1}}
\nc{\btred}[1]{\textcolor{red}{\bf #1}}
\nc{\btblue}[1]{\textcolor{blue}{\bf #1}}
\nc{\btgreen}[1]{\textcolor{green}{\bf #1}}
\nc{\btpurple}[1]{\textcolor{purple}{\bf #1}}
\nc{\NN}{{\mathbb N}}
\nc{\ncsha}{{\mbox{\cyr X}^{\mathrm NC}}} \nc{\ncshao}{{\mbox{\cyr
X}^{\mathrm NC}_0}}
\newcommand{\efootnote}[1]{}
\renewcommand{\textbf}[1]{}
\newcommand{\delete}[1]{}
\nc{\mlabel}[1]{\label{#1} {{\tt {\tiny{(#1)}}}}\ }
\nc{\mcite}[1]{\cite{#1} {{\tiny\tt (#1)}}\ }
\nc{\mref}[1]{\ref{#1}{{\tiny\tt (#1)}}\ }
\nc{\meqref}[1]{~\eqref{#1}{{\tiny\tt (#1)}}\ }
\nc{\mbibitem}[1]{\bibitem[\bf #1]{#1}}
	\nc{\mlabel}[1]{\label{#1}}  
	\nc{\mcite}[1]{\cite{#1}}  
	\nc{\mref}[1]{\ref{#1}}  
	\nc{\meqref}[1]{~\eqref{#1}}
	\nc{\mbibitem}[1]{\bibitem{#1}} 
\nc{\tforall}{\quad \text{ for all }}
\nc{\gsb}{Gr\"obner-Shirshov basis\xspace}
\nc{\gsbs}{Gr\"obner-Shirshov bases\xspace}
\nc{\mdl}{\text{dl}}
\nc{\opa}{\ast} \nc{\opb}{\odot} \nc{\op}{\bullet} \nc{\pa}{\frakL}
\nc{\arr}{\rightarrow} \nc{\lu}[1]{(#1)} \nc{\mult}{\mrm{mult}}
\nc{\diff}{\mathfrak{Diff}}
\nc{\opc}{\sharp}\nc{\opd}{\natural}
\nc{\ope}{\circ}
\nc{\dpt}{\mathrm{d}}
\nc{\hck}{H_{\rm RT}}
\nc{\vdf}{\calf}
\nc{\ldf}{\calf_\ell}
\nc{\hlf}{H_\ell}
\nc{\onek}{\mathbf{1}_\bfk}
\nc{\merba}{matching extended Rota-Baxter algebra\xspace}
\nc{\Merba}{Matching extended Rota-Baxter algebra\xspace}
\nc{\merbas}{matching extended Rota-Baxter algebras\xspace}
\nc{\Merbas}{Matching extended Rota-Baxter algebras\xspace}
\nc{\match}{matching\xspace}
\nc{\Match}{Matching\xspace}
\nc{\me}{matching extended\xspace}
\nc{\Me}{Matching extended\xspace}
\nc{\paybe}{polarized associative Yang-Baxter equation\xspace}
\nc{\Paybe}{Polarized associative Yang-Baxter equation\xspace}
\nc{\cpaybe}{PAYBE}
\nc{\erba}{extended Rota-Baxter algebra\xspace}
\nc{\erbas}{extended Rota-Baxter algebras\xspace}
\nc{\diam}{alternating\xspace}
\nc{\Diam}{Alternating\xspace}
\nc{\cdiam}{canonical alternating\xspace}
\nc{\Cdiam}{Canonical alternating\xspace}
\nc{\AW}{\mathcal{A}}
\nc{\ari}{\mathrm{ar}}
\nc{\lef}{\mathrm{lef}}
\nc{\Sh}{\mathrm{ST}}
\nc{\Cr}{\mathrm{Cr}}
\nc{\st}{{Schr\"oder tree}\xspace}
\nc{\sts}{{Schr\"oder trees}\xspace}
\nc{\vertset}{\Omega} 
\nc{\assop}{\quad \begin{picture}(5,5)(0,0)
\line(-1,1){10}
\put(-2.2,-2.2){$\bullet$}
\line(0,-1){10}\line(1,1){10}
\end{picture} \quad \smallskip}
\nc{\operator}{\begin{picture}(5,5)(0,0)
\line(0,-1){6}
\put(-2.6,-1.8){$\bullet$}
\line(0,1){9}
\end{picture}}
\nc{\idx}{\begin{picture}(6,6)(-3,-3)
\put(0,0){\line(0,1){6}}
\put(0,0){\line(0,-1){6}}
\end{picture}}
\nc{\pb}{{\mathrm{pb}}}
\nc{\Lf}{{\mathrm{Lf}}}
\nc{\lft}{{left tree}\xspace}
\nc{\lfts}{{left trees}\xspace}
\nc{\fat}{{fundamental averaging tree}\xspace}
\nc{\fats}{{fundamental averaging trees}\xspace}
\nc{\avt}{\mathrm{Avt}}
\nc{\rass}{{\mathit{RAss}}}
\nc{\aass}{{\mathit{AAss}}}
\nc{\vin}{{\mathrm Vin}}    
\nc{\lin}{{\mathrm Lin}}    
\nc{\inv}{\mathrm{I}n}
\nc{\gensp}{V} 
\nc{\genbas}{\mathcal{V}} 
\nc{\bvp}{V_P}     
\nc{\gop}{{\,\omega\,}}     
\nc{\bin}[2]{ (_{\stackrel{\scs{#1}}{\scs{#2}}})}  
\nc{\binc}[2]{\left (\!\! \begin{array}{c} \scs{#1}\\
    \scs{#2} \end{array}\!\! \right )}  
\nc{\bincc}[2]{\left ( {\scs{#1} \atop
    \vspace{-1cm}\scs{#2}} \right )}  
\nc{\bs}{\bar{S}} \nc{\cosum}{\sqsubset} \nc{\la}{\longrightarrow}
\nc{\rar}{\rightarrow} \nc{\dar}{\downarrow} \nc{\dprod}{**}
\nc{\dap}[1]{\downarrow \rlap{$\scriptstyle{#1}$}}
\nc{\md}{\mathrm{dth}} \nc{\uap}[1]{\uparrow
\rlap{$\scriptstyle{#1}$}} \nc{\defeq}{\stackrel{\rm def}{=}}
\nc{\disp}[1]{\displaystyle{#1}} \nc{\dotcup}{\
\displaystyle{\bigcup^\bullet}\ } \nc{\gzeta}{\bar{\zeta}}
\nc{\hcm}{\ \hat{,}\ } \nc{\hts}{\hat{\otimes}}
\nc{\barot}{{\otimes}} \nc{\free}[1]{\bar{#1}}
\nc{\uni}[1]{\tilde{#1}} \nc{\hcirc}{\hat{\circ}} \nc{\lleft}{[}
\nc{\lright}{]} \nc{\lc}{\lfloor} \nc{\rc}{\rfloor}
\nc{\curlyl}{\left \{ \begin{array}{c} {} \\ {} \end{array}
    \right .  \!\!\!\!\!\!\!}
\nc{\curlyr}{ \!\!\!\!\!\!\!
    \left . \begin{array}{c} {} \\ {} \end{array}
    \right \} }
\nc{\longmid}{\left | \begin{array}{c} {} \\ {} \end{array}
    \right . \!\!\!\!\!\!\!}
\nc{\onetree}{\bullet} \nc{\ora}[1]{\stackrel{#1}{\rar}}
\nc{\ola}[1]{\stackrel{#1}{\la}}
\nc{\ot}{\otimes} \nc{\mot}{{{\boxtimes\,}}}
\nc{\otm}{\overline{\boxtimes}} \nc{\sprod}{\bullet}
\nc{\scs}[1]{\scriptstyle{#1}} \nc{\mrm}[1]{{\rm #1}}
\nc{\margin}[1]{\marginpar{\rm #1}}   
\nc{\dirlim}{\displaystyle{\lim_{\longrightarrow}}\,}
\nc{\invlim}{\displaystyle{\lim_{\longleftarrow}}\,}
\nc{\mvp}{\vspace{0.3cm}} \nc{\tk}{^{(k)}} \nc{\tp}{^\prime}
\nc{\ttp}{^{\prime\prime}} \nc{\svp}{\vspace{2cm}}
\nc{\vp}{\vspace{8cm}} \nc{\proofbegin}{\noindent{\bf Proof: }}
\nc{\proofend}{$\blacksquare$ \vspace{0.3cm}}
\nc{\modg}[1]{\!<\!\!{#1}\!\!>}
\nc{\intg}[1]{F_C(#1)} \nc{\lmodg}{\!
<\!\!} \nc{\rmodg}{\!\!>\!}
\nc{\cpi}{\widehat{\Pi}}
\nc{\sha}{{\mbox{\cyr X}}}  
\newfont{\scyr}{wncyr10 scaled 550}
\nc{\ssha}{\mbox{\bf \scyr X}}
\nc{\shap}{{\mbox{\cyrs X}}} 
\nc{\shpr}{\diamond}    
\nc{\shp}{\ast} \nc{\shplus}{\shpr^+}
\nc{\shprc}{\shpr_c}    
\nc{\msh}{\ast} \nc{\zprod}{m_0} \nc{\oprod}{m_1}
\nc{\vep}{\epsilon} \nc{\labs}{\mid\!} \nc{\rabs}{\!\mid}
\nc{\sqmon}[1]{\langle #1\rangle}
\nc{\mmbox}[1]{\mbox{\ #1\ }} \nc{\dep}{\mrm{dep}} \nc{\fp}{\mrm{FP}}
\nc{\rchar}{\mrm{char}} \nc{\End}{\mrm{End}} \nc{\Fil}{\mrm{Fil}}
\nc{\Mor}{Mor\xspace} \nc{\gmzvs}{gMZV\xspace}
\nc{\gmzv}{gMZV\xspace} \nc{\mzv}{MZV\xspace}
\nc{\mzvs}{MZVs\xspace} \nc{\Hom}{\mrm{Hom}} \nc{\id}{\mrm{id}}
\nc{\im}{\mrm{im}} \nc{\incl}{\mrm{incl}} \nc{\map}{\mrm{Map}}
\nc{\mchar}{\rm char} \nc{\nz}{\rm NZ} \nc{\supp}{\mathrm Supp}
\nc{\Alg}{\mathbf{Alg}} \nc{\Bax}{\mathbf{Bax}} \nc{\bff}{\mathbf f}
\nc{\bfk}{{\bf k}} \nc{\bfone}{{\bf 1}} \nc{\bfx}{\mathbf x}
\nc{\bfy}{\mathbf y}
\nc{\base}[1]{\bfone^{\otimes ({#1}+1)}} 
\nc{\Cat}{\mathbf{Cat}}
\nc{\detail}{\marginpar{\bf More detail}
    \noindent{\bf Need more detail!}
    \svp}
\nc{\Int}{\mathbf{Int}} \nc{\Mon}{\mathbf{Mon}}
\nc{\rbtm}{{shuffle }} \nc{\rbto}{{Rota-Baxter }}
\nc{\remarks}{\noindent{\bf Remarks: }} \nc{\Rings}{\mathbf{Rings}}
\nc{\Sets}{\mathbf{Sets}} \nc{\wtot}{\widetilde{\odot}}
\nc{\wast}{\widetilde{\ast}} \nc{\bodot}{\bar{\odot}}
\nc{\bast}{\bar{\ast}} \nc{\hodot}[1]{\odot^{#1}}
\nc{\hast}[1]{\ast^{#1}} \nc{\mal}{\mathcal{O}}
\nc{\tet}{\tilde{\ast}} \nc{\teot}{\tilde{\odot}}
\nc{\oex}{\overline{x}} \nc{\oey}{\overline{y}}
\nc{\oez}{\overline{z}} \nc{\oef}{\overline{f}}
\nc{\oea}{\overline{a}} \nc{\oeb}{\overline{b}}
\nc{\weast}[1]{\widetilde{\ast}^{#1}}
\nc{\weodot}[1]{\widetilde{\odot}^{#1}} \nc{\hstar}[1]{\star^{#1}}
\nc{\lae}{\langle} \nc{\rae}{\rangle}
\nc{\lf}{\lfloor}
\nc{\rf}{\rfloor}
\nc{\QQ}{{\mathbb Q}}
\nc{\RR}{{\mathbb R}} \nc{\ZZ}{{\mathbb Z}}
\nc{\cala}{{\mathcal A}} \nc{\calb}{{\mathcal B}}
\nc{\calc}{{\mathcal C}}
\nc{\cald}{{\mathcal D}} \nc{\cale}{{\mathcal E}}
\nc{\calf}{{\mathcal F}} \nc{\calg}{{\mathcal G}}
\nc{\calh}{{\mathcal H}} \nc{\cali}{{\mathcal I}}
\nc{\call}{{\mathcal L}} \nc{\calm}{{\mathcal M}}
\nc{\caln}{{\mathcal N}} \nc{\calo}{{\mathcal O}}
\nc{\calp}{{\mathcal P}} \nc{\calr}{{\mathcal R}}
\nc{\cals}{{\mathcal S}} \nc{\calt}{{\mathcal T}}
\nc{\calu}{{\mathcal U}} \nc{\calw}{{\mathcal W}} \nc{\calk}{{\mathcal K}}
\nc{\calx}{{\mathcal X}} \nc{\CA}{\mathcal{A}}
\nc{\fraka}{{\mathfrak a}} \nc{\frakA}{{\mathfrak A}}
\nc{\frakb}{{\mathfrak b}} \nc{\frakB}{{\mathfrak B}}
\nc{\frakc}{{\mathfrak c}}
\nc{\frakD}{{\mathfrak D}} \nc{\frakF}{\mathfrak{F}}
\nc{\frakf}{{\mathfrak f}} \nc{\frakg}{{\mathfrak g}}
\nc{\frakH}{{\mathfrak H}} \nc{\frakL}{{\mathfrak L}}
\nc{\frakM}{{\mathfrak M}} \nc{\bfrakM}{\overline{\frakM}}
\nc{\frakm}{{\mathfrak m}} \nc{\frakP}{{\mathfrak P}}
\nc{\frakN}{{\mathfrak N}} \nc{\frakp}{{\mathfrak p}}
\nc{\frakS}{{\mathfrak S}} \nc{\frakT}{\mathfrak{T}}
\nc{\frakX}{{\mathfrak X}}
\nc{\BS}{\mathbb{S
}}
\font\cyr=wncyr10 \font\cyrs=wncyr7
\nc{\ID}{{\rm I}}\nc{\lbar}[1]{\overline{#1}}\nc{\bre}{{\rm bre}}
\nc{\sd}{\cals}\nc{\rb}{\rm RB}\nc{\A}{\rm A}\nc{\LL}{\rm L}\nc{\tx}{\tilde{X}}
\nc{\col}{\Delta_{\rm RT}}
\nc{\mul}{m_{\rm RT}}
\nc{\ul}{u_{\rm RT}}
\nc{\epl}{\varepsilon_{\rm RT}}
\nc{\hl}{H_{\rm RT}}
\nc{\arro}[1]{#1}\nc{\px}{P_{\tx}}\nc{\pw}{P_{\mathfrak{w}}}\nc{\pl}{B_\omega^+}
\nc{\pp}{\pl}\nc{\ppp}[1]{B^+(#1)}\nc{\dw}{\diamond_{\mathfrak{w}}}\nc{\dl}{\diamond_{\rm \ell}}
\nc{\ncshaw}{\sha^{{\rm NC}}_{\Omega}}
\nc{\ncshal}{\sha^{{\rm NC}}_{{\rm RT}}}
\nc{\ncshall}{\sha^{{\rm NC}}_{{\rm RT,\,(\lambda,\,\lambda)}}}
\nc{\ver}{\rm V}\nc{\ld}{l}\nc{\del}{\Delta_{{\rm \ell}}}\nc{\epsl}{\epsilon_{{\rm \ell}}}
\nc{\uul}{u_{{\rm \ell}}}\nc{\oneh}{\mathbf{1}}\nc{\onew}{\mathbf{1}}
\nc{\etree}{1} \nc{\conc}{m_{\rm RT}} \nc{\mpu}{u^{\ast}} \nc{\mpv}{v^{\ast}}
\nc{\brep}{\text{bre}_{P}} \nc{\leqo}{\leq_{\text{db}}} \nc{\odb}{<_{\text{db}}}
\nc{\hrtb}{H_{\rm RT}(X\sqcup\Omega)}
\nc{\hrts}{H_{\rm RT}(X, \Omega)}
\nc{\rts}{\mathcal{T}_{\rm RT}(X, \Omega)}
\nc{\rfs}{\mathcal{F}_{\rm RT}(X, \Omega)}
\nc{\ldl}{\leq_{\mathrm{db}}}
\nc{\pla}{B_{\alpha}^{+}}
\nc{\plb}{B_{\beta}^{+}}
\nc{\bim}[1]{#1}  \nc{\shaop}{\sha_{\Omega}^{+}}  \nc{\shao}{\sha_{\Omega}}
\nc{\bbim}[2]{#1 #2} \nc{\bbbim}[2]{#1,\, #2} \nc{\RBF}{{\rm RBF}}
\nc{\frbf}{F_{\RBF}} \nc{\shaf}{\ssha_{\tiny{\Omega}}} \nc{\sham}{\diamond_{\Omega}}
\nc{\dnx}{\Delta_n A} \nc{\dx}{\Delta A} \nc{\dgp}{{\rm deg_{P}}}
\nc{\dgt}{{\rm deg_{T}}} \nc{\dg}{{\rm deg}} \nc{\ida}{ID($A$)} \nc{\tu}{\tilde{u}} \nc{\tv}{\tilde{v}}
 \nc{\fbase}{\calb} \nc{\LF}{\mathrm{RF}} \nc{\FFA}{\mathrm{LF}} \nc{\irr}{\mathrm{Irr}}
 \nc{\result}{\bfk\mathrm{Irr}(S_n)}  \nc{\I}{I_{\mathrm{ID},n}^0}
 \nc{\nrs}{\calr_n^\star} \nc{\ii}{\mathrm{I}} \nc{\iii}{\mathrm{II}}
\nc{\intl}{{\rm int}}\nc{\ws}[1]{{#1}}\nc{\deleted}[1]{\delete{#1}}\nc{\plas}{placements\xspace}
\nc{\Id}{\mathrm{Id}} \nc{\Irr}{\mathrm{Irr}}
\newcommand{\sbullet}
{\begin{picture}(5,5)(-2,-1)
		\put(1,2){\circle*{2}}
\end{picture}}
\nc{\tos}{totally ordered set }
\nc{\nes}{nonempty set}
\nc{\Po}{(P_\omega)_{\omega\in \Omega}}
\nc{\Pop}{(P'_\omega)_{\omega\in \Omega}}
\nc{\Bo}{(B_{\omega}^+)_{\omega\in \Omega}}
\nc{\leafset}{\phi}
\nc{\leafsp}{\Phi}
\newcommand{\tdun}[1]  
{\begin{picture}(10,5)(-2,-1)
\put(2,3){\circle*{2}}
\put(3,-2){\tiny #1}
\end{picture}}
\newcommand{\tddeux}[2]{\begin{picture}(12,5)(0,-1)
\put(3,0){\circle*{2}}
\put(3,0){\line(0,1){5}}
\put(3,5){\circle*{2}}
\put(6,-3){\tiny #1}
\put(6,3){\tiny #2}
\end{picture}}
\newcommand{\tdtroisun}[3]{\begin{picture}(20,12)(-5,-1)
\put(3,0){\circle*{2}}
\put(-0.65,0){$\vee$}
\put(6,7){\circle*{2}}
\put(0,7){\circle*{2}}
\put(5,-2){\tiny #1}
\put(8,5){\tiny #2}
\put(-6,5){\tiny #3}
\end{picture}}
\newcommand{\tdtroisdeux}[3]{\begin{picture}(12,12)(-2,-1)
\put(0,0){\circle*{2}}
\put(0,0){\line(0,1){5}}
\put(0,5){\circle*{2}}
\put(0,5){\line(0,1){5}}
\put(0,10){\circle*{2}}
\put(3,-2){\tiny #1}
\put(3,3){\tiny #2}
\put(3,9){\tiny #3}
\end{picture}}
\newcommand{\tdquatretrois}[4]{\begin{picture}(20,20)(-5,-1)
\put(3,0){\circle*{2}}
\put(-.65,0){$\vee$}
\put(6,7){\circle*{2}}
\put(0,7){\circle*{2}}
\put(6,14){\circle*{2}}
\put(6,7){\line(0,1){7}}
\put(5,-2){\tiny #1}
\put(8,5){\tiny #2}
\put(-6,5){\tiny #4}
\put(8,12){\tiny #3}
\end{picture}}
\newcommand{\tddquatretrois}[4]{\begin{picture}(20,20)(-5,-1)
\put(3,0){\circle*{2}}
\put(-.65,0){$\vee$}
\put(6,7){\circle*{2}}
\put(0,7){\circle{2}}
\put(6,14){\circle{2}}
\put(6,7){\line(0,1){7}}
\put(5,-2){\tiny #1}
\put(8,5){\tiny #2}
\put(-28,10){\tiny #4}
\put(8,12){\tiny #3}
\end{picture}}
\definecolor{red}{rgb}{1.,0.,0.}
\definecolor{green}{rgb}{0.,1.,0.}
\definecolor{blue}{rgb}{0.,0.,1.}
\begin{document}

\title[Generalized Connes-Kreimer Hopf algebras by weighted cocycles]{Generalized Connes-Kreimer Hopf algebras on decorated rooted forests by weighted cocycles}

\author{Fei Wang}
\address{School of Mathematics and Statistics,
	Nanjing University of Information Science \& Technology, Nanjing, Jiangsu 210044, China}
\email{wangfei@nuist.edu.cn}

\author{Li Guo}
\address{Department of Mathematics and Computer Science, Rutgers University, Newark, NJ 07102, USA}
\email{liguo@rutgers.edu}

\author{Yi Zhang}
\address{School of Mathematics and Statistics,
	Nanjing University of Information Science \& Technology, Nanjing, Jiangsu 210044, China}
\email{zhangy2016@nuist.edu.cn}

\date{\today}
\begin{abstract}
The Connes–Kreimer Hopf algebra of rooted trees is an operated Hopf algebra whose coproduct satisfies the classical Hochschild 1-cocycle condition. In this paper, we extend the setting from rooted trees to the space $H_{\rm RT}(X,\Omega)$ of $(X,\Omega)$-rooted trees, in which internal vertices are decorated by a set $\Omega$ and leafs are decorated by $X \cup \Omega$. We introduce a new coalgebra structure on $H_{\rm RT}(X,\Omega)$ whose coproduct satisfies a weighted Hochschild 1-cocycle condition involving multiple operators, thereby generalizing the classical condition. A combinatorial interpretation of this coproduct is also provided. We then endow $H_{\rm RT}(X,\Omega)$ with a Hopf algebra structure. Finally, we define weighted $\Omega$-cocycle Hopf algebras, characterized by a Hochschild 1-cocycle condition with weights, and show that $H_{\rm RT}(X,\Omega)$ is the free object in the category of $\Omega$-cocycle Hopf algebras.
\end{abstract}

\subjclass[2020]{    
16T30,	
05E16,   
16T10, 
05C05,   
16W99, 
16S10 
}

\keywords{decorated rooted tree; Hopf algebra; bialgebra; 1-cocycle condition with weight}

\maketitle

\vspace{-1cm}

\tableofcontents  

\vspace{-1cm}
\setcounter{section}{0}  
\allowdisplaybreaks  

\section{Introduction}
This paper generalizes the cocycle property in the Connes-Kreimer Hopf algebra of rooted trees to a weighted cocycle condition in a Hopf algebra of multi-decorated rooted trees with multiple operators, and obtain the universal property of the new Hopf algebra as weighted cocycle Hopf algebras. 

\subsection{Hopf algebras of decorated rooted trees}
In the landmark work of Connes and Kreimer \mcite{CK98} on the renormalization of perturbative quantum field theory, a Hopf algebra of rooted tree was introduced as a toy model of the Hopf algebra of Feynman graphs\,\mcite{CK1,Kr98}. 
Other types of rooted tree Hopf algebras have also established extensive connections with other areas of mathematics, including operad theory \mcite{LR98,LR04}, pre-Lie and Lie algebras \mcite{Mu06}, Rota-Baxter algebras \mcite{EG08, ZGG16, ZGG, ZL25}, differential algebras \mcite{GL05}, and infinitesimal bialgebras \mcite{ZCGL,ZG20}. Furthermore, rooted tree Hopf algebras find applications in diverse interdisciplinary areas such as numerical analysis \mcite{LM09}, stochastic processes \mcite{BHZ19}, and minimalism program of generative linguistics \mcite{MBC23}.

In order to adapt to more complex combinatorial and physics applications, decorations on rooted tree Hopf algebras have been gradually introduced. For example, Foissy decorated the vertices of planar rooted trees with a nonempty set $\Omega$ in \mcite{Fo02}, and constructed a noncommutative and noncocommutative rooted tree Hopf algebra based on these vertex decorated rooted trees. 
In the context operated algebras\,\mcite{Guo09}, two decorations on rooted trees are introduced, one on the internal vertices and one on the leaf vertices. They give the construction of free operated nonunitary semigroups and free operated nonunitary algebras. These decorations are modified in ~\mcite{ZGG16}, making it possible to construct free operated monoids and free operated unitary algebras with multiple grafting operators.
Building on \mcite{ZGG16} and generalizing Foissy's noncommutative rooted tree Hopf algebra, the paper \mcite{ZGG}  constructed a Hopf algebra of rooted forests decorated by two disjoint sets $X$ and $\Omega$. 

\subsection{Grafting operators on rooted tree Hopf algebras and cocycle conditions}
The rich theory and combinatorial significance of rooted tree Hopf algebras stemmed from the grafting of a forest to a tree, making the Hopf algebra into an operated Hopf algebra for which the cocycle satisfies a 1-cocycle condition. 

The study in this direction can be traced back to Kurosh \mcite{Kur60}, who introduced the concept of algebras with linear operators, influenced by Higgins' work on multi-operator groups \mcite{Hi56}. 
The realizations of free operated semigroups and free operated nonunitary algebras on rooted trees were provided in\, \mcite{Guo09}; while realizations of free operated monoids and free operated (unitary) algebras on rooted trees were provided in\, \mcite{ZGG16}. The grafting operator of rooted forests equipped the underlying algebra of the Connes-Kreimer Hopf algebra with an operated algebra structure which is in fact a free operated algebra~\mcite{Guo09,ZGG16}. On the other hand, the interaction of this operator with the comultiplication leads to the classical Hochschild 1-cocycle condition\,\mcite{Mo}, again satisfying a universal property. 
In \mcite{ZGG16}, the notion of an operated Hopf algebra was given, of which the Connes-Kreimer Hopf algebra, equipped with the grafting operator $B^+$, is an example.  
The paper \mcite{ZGG} generalized the operated Hopf algebras to multiple operators case, and defined the $\Omega$-cocycle Hopf algebra.

In various types of operated Hopf algebras, the coproducts and the operators typically satisfy the Hochschild 1-cocycle condition or its generalizations. For example, the Connes-Kreimer Hopf algebra satisfies the Hochschild 1-cocycle condition
\begin{equation} \mlabel{eq:cocyc}
	\Delta P=P\otimes1+(\id\otimes P)\Delta.
	\end{equation}
We would like to emphasize that $\Omega$-cocycle Hopf algebras \mcite{ZGG} satisfy a multiple operators version of this condition.
Left counital Hopf algebras on free commutative Nijenhuis algebras \mcite{ZG,PZGL20} satisfy the variation of the Hochschild 1-cocycle condition, and weighted $\Omega$-cocycle infinitesimal unitary bialgebras defined in \mcite{ZCGL} satisfy weighted cocycle condition. Furthermore, another weighted Hochschild 1-cocycle condition \mcite{ZL25} has been applied in constructing operated algebra structures on free modified Rota-Baxter algebras.

In a recent study~\mcite{MBC23}, the theory of operator Hopf algebras has been applied in Chomsky's Minimalism Program to formalize syntactic merge operations and to study the interaction between semantic computation and syntactic generation with the help of the Hopf algebra structure. This interdisciplinary application not only strengthens the mathematical foundation of generative linguistics but also provides new methodological approaches for natural language processing and computational linguistics.

\subsection{Weighted cocycle conditions and structure of this paper}
In this paper, we generalize the cocycle condition in Eq.~\meqref{eq:cocyc} and its multi-operator version to a weighted cocycle condition, thereby obtaining a new rooted tree Hopf algebra.
Specifically, for a nonempty set $\Omega$ and constants $\lambda_\Omega:=\{\lambda_\omega\,|\,\omega\in\Omega\}$, we consider the {\bf Hochschild 1-cocycle condition with weight $\lambda_\Omega$}
\begin{equation}\mlabel{eq:gcocyc}
\Delta P_\omega=P_\omega\otimes1+(\id\otimes P_\omega)\Delta+\lambda_\omega\id\otimes1, \quad\omega\in\Omega.
\end{equation}
We then apply them to define $\Omega$-cocycle Hopf algebras {\em with weights} and generalize previous studies on $\Omega$-cocycle Hopf algebras to this setting. 

In Section \mref{sec:oophopf}, we first review the basic concepts of planar rooted trees and planar rooted forests, leading to the notions of $(X, \Omega)$-decorated planar rooted trees and forests decorated by a set $X$ and a set $\Omega$, where internal vertices are decorated only by elements from $\Omega$. Then, on the free module $\hrts$ generated by $(X, \Omega)$-decorated rooted forests $\calf_{\rm RT}(X,\Omega)$, we construct a new coproduct satisfying the Hochschild 1-cocycle condition of weight $\lambda_\Omega$ (Eq.~\meqref{eq:cdbp}), extending the Hochschild 1-cocycle condition of weight $\lambda$ \mcite{ZL25} to the case of multiple operators. Generalizing the admissible-cut description of the coproduct in the Connes-Kreimer Hopf algebra, we provide a combinatorial interpretation of the new coproduct $\col$ (Theorem \mref{thm:com}). 

In Section \mref{sec:cocyhopf}, we show that this coproduct equips the algebra $\hrts$ of $(X,\Omega)$-decorated forests with a bialgebra structure (Theorem \mref{thm:bia}). Furthermore, we prove that $\hrts$ is a Hopf algebra (Theorem \mref{thm:hrth}). At the end, we combine this Hopf algebra with operated algebras. We begin by recalling the definition of an $\Omega$-operator algebra. Inspired by the Hochschild 1-cocycle condition of weighted $\lambda_\Omega$, we introduce the concept of $\Omega$-cocycle Hopf algebras with weights, and thereby show that $\hrts$ is a free $\Omega$-cocycle Hopf algebra (Theorem \mref{thm:propm}).

\smallskip  
\noindent
{\bf  Convention.}
Throughout this paper, we fix a unitary commutative ring $\bfk$ as the base ring for all modules, algebras, coalgebras, bialgebras, tensor products, and linear maps. Unless otherwise specified, all algebras mentioned in this work are assumed to be unitary and associative.

\section{A generalization of the construction of Connes-Kreimer Hopf algebras}
\mlabel{sec:oophopf}
In this section, we first recall the concept of the $(X, \Omega)$-decorated rooted trees and forests as defined in \mcite {ZGG16}, where the leaves and internal vertices are decorated by two distinct sets. Subsequently, we define a new coproduct and counit in the free $\bfk$-module $\hrts$ generated by $(X, \Omega)$-decorated rooted forests, which generalizes the construction in \cite{ZGG}.

\subsection{$(X, \Omega)$-decorated rooted forests}
\mlabel{ss:ck}

We recall basic notions of planar rooted trees and furthermore doubly decorated rooted trees. 

A {\bf tree} is an undirected graph in which any two vertices are connected by exactly one path, or equivalently a connected acyclic undirected graph.
A {\bf rooted tree} is a tree in which a particular vertice is distinguished from the others and called the {\bf root}.
A {\bf planar rooted tree} is a rooted tree with a fixed embedding into the plane.

Let $\calt$ denote the set of planar rooted trees and $\calf$ the set of {\bf planar rooted forests}. Algebraically, $\calf$ can be defined as the free monoid generated by $\calt$, denoted by $\calf:=M(\calt)$, whose multiplication is the concatenation product $\mul$ (often omitted for brevity). The {\bf empty tree} in $\calf$ is denoted by $1$, the unit of $\calf$.
A {\bf planar rooted forest} $F\in \calf$ is a concatenation of
planar rooted trees, denoted by $F=T_1 \cdots T_n$, where $T_1,\ldots,T_n \in \calt$.
We will use the convention that $F = 1$ when $n = 0$.

\begin{defn} \mcite{ZGG16,ZGG}
\begin{enumerate}
\item Let $X$ be a set and $\Omega$ be a nonempty set that is disjoint from X. A {\bf $(X, \Omega)$-decorated} rooted tree is a planar rooted tree whose internal vertices (which are not leaves) are decorated only by elements of $\Omega$ and leaf vertices are decorated by elements of $X\sqcup \Omega$. Denote by $\calt_{\rm RT}(X,\Omega)$ (resp. $\calf_{\rm RT}(X,\Omega)$) the set of $(X, \Omega)$-decorated rooted trees (resp. forests). The only vertex of the tree $\sbullet$ is viewed as a leaf vertex.
\item Let $\hck(X,\Omega)$ be the free $\bfk$-module spanned by the $\calf_{\rm RT}(X,\Omega)$:
       \begin{equation*}
          \hrts:=\bfk\calf_{\rm RT}(X,\Omega)=\bfk M(\calt_{\rm RT}(X,\Omega))=\bfk\langle \calt_{\rm RT}(X,\Omega)\rangle.
      \end{equation*}
For any $\omega\in\Omega$, define
        \begin{equation*}
           B^+_\omega : \hrts \to \hrts
           \end{equation*}
           to be the grafting operator by sending $1$ to $\sbullet_\omega\,,\omega\in\Omega$ and sending a rooted forest in $\calf_{\rm RT}(X,\Omega)$ to its grafting with the new root decorated by $\omega\in\Omega$. Then the space $\hrts$ forms an operated unitary algebra under the concatenation product $\mul$.
 \end{enumerate}
 \mlabel{de:oua}
\end{defn}

\begin{exam}
    The first few $(X, \Omega)$-decorated rooted trees are listed below:
    $$\etree,\ \tdun{$\alpha$},\ \, \tdun{$x$},\ \, \tddeux{$\alpha$}{$\beta$},\ \,  \tddeux{$\alpha$}{$x$}, \ \, \tdtroisun{$\alpha$}{$y$}{$x$},\ \,\tdtroisun{$\alpha$}{$\beta$}{$x$}, \ \,\tdtroisun{$\alpha$}{$y$}{$\gamma$},\ \,\tdtroisun{$\alpha$}{$\gamma$}{$\beta$}, \ \, \tdquatretrois{$\alpha$}{$\beta$}{$y$}{$x$},\ \, \tdquatretrois{$\alpha$}{$\beta$}{$\gamma$}{$x$}, \ \, \tdquatretrois{$\alpha$}{$\beta$}{$y$}{$\omega$},\ \, \tdquatretrois{$\alpha$}{$\beta$}{$\gamma$}{$\omega$},\quad x,y\in X,\ \alpha,\beta,\gamma,\omega\in \Omega.$$
    However, the following are not in $\calf_{\rm RT}(X,\Omega)$ since some of the internal vertices are decorated by elements of $X$:
    $$\tddeux{$x$}{$\alpha$},\ \, \tddeux{$x$}{$y$},\ \, \tdtroisdeux{$x$}{$\beta$}{$\alpha$},\ \, \tdquatretrois{$\alpha$}{$x$}{$\gamma$}{$\beta$},\quad x,y\in X,\ \alpha,\beta,\gamma\in \Omega.   $$

\end{exam}

\begin{exam}
    Here are some examples of the grafting operators $B^+ _{\omega}$, $\omega\in\Omega$:
    $$B^+ _{\omega}(1)=\sbullet_{\omega}\,,\ \  B^+ _{\omega}(\sbullet_{x})=\tddeux{$\omega$}{$x$}\,,\ \  B^+ _{\omega}(\tdun{$x$}\tddeux{$\beta$}{$y$})=\tdquatretrois{$\omega$}{$\beta$}{$y$}{$x$}.
    $$
\end{exam}

Let $F=T_1 \cdots T_k \in \calf_{\rm RT}(X,\Omega)$, where $k \geq 1$, $T_i \neq 1$ and $T_i \in \calt_{{\rm RT}}(X,\Omega)$ for $1 \leq i \leq k$. We define $\bre(F) := k$ to be the {\bf breadth} of $F$ with the convention that $\bre(1)=0$. Next, we will define the depth of $(X, \Omega)$-decorated rooted forests by a recursion, yielding a filtered structure on $\calf_{{\rm RT}}(X,\Omega)$.

 Denote $\sbullet_X:=\{\sbullet_x\mid x\in X\}$. Define
 \begin{equation*}
     \calf_{{\rm RT},0}:=M(\sbullet_X)=S(\sbullet_X)\cup\{1\},
 \end{equation*}
 where $M(\sbullet_X)$ (resp. $S(\sbullet_X)$) is the submonoid (resp. subsemigroup) of $\calf_{{\rm RT}}(X,\Omega)$ generated by $\sbullet_X$.
 Assume that $\calf_{{\rm RT},n},n\geq 0$, has been defined. Then define
 \begin{equation}
     \calf_{{\rm RT},n+1}:=M(\sbullet_X \sqcup B^+ _{\Omega}(\calf_{{\rm RT},n}))=M\Big(\sbullet_X \sqcup (\sqcup_{\omega\in\Omega}B^+_{\omega}(\calf_{{\rm RT},n}))\Big).
     \mlabel{eq:frtn}
 \end{equation}
 Through the above construction, we have $\calf_{{\rm RT},n}\subseteq\calf_{{\rm RT},n+1},n\geq0$. Then
\begin{equation*}
    \calf_{{\rm RT}}(X,\Omega)=\dirlim\calf_{{\rm RT},n}=\bigcup_{n=0}^{\infty}\calf_{{\rm RT},n}.
\end{equation*}
 Elements $F\in \calf_{{\rm RT},n} \backslash \calf_{{\rm RT},n-1}$ are said to have {\bf depth} $n$, denoted by $\dep(F) = n$. For $F =
 T_1 \cdots F_k$, we obtain
 \begin{equation*}
     \dep(F) = \text{max}\{\dep(T_i)\mid i = 1,\ldots ,k\}.
 \end{equation*}
Intuitively, $\dep(F)$ is largest number of elements in $\Omega$ in a path from the root of $F$ to one of the leaves. 

We emphasize that $\dep(F)$ is not the usually depth of a rooted tree, defined to be the longest path from the root of $F$ to one of the leaves. Rather, notice that a leaf of $F\in \calf_{{\rm RT}}(X,\Omega)$ can be decorated by either $X$ or $\Omega$. If one of the longest path has its leaf decorated by $\Omega$, then $\dep(F)$ is the length of the longest path plus one. If all the longest paths has their leaves decorated by $X$, then $\dep(F)$ is simply the length of the longest path.

We demonstrate this property by some examples. 
\begin{exam} 
Let $x,y \in X$ and $\alpha,\beta \in \Omega.$ We have
$$\dep(\etree)=\dep(\sbullet_x)=\dep(\sbullet_x\sbullet_y)=0,$$
$$\dep(\sbullet_\alpha)=\dep(\tddeux{$\alpha$}{$x$})=1,$$
$$\dep(\tddeux{$\alpha$}{$\beta$}) 
=\dep(\ \tdquatretrois{$\alpha$}{$\beta$}{$x$}{$y$})=2.$$
\end{exam}

\subsection{Constructions of coproducts and counits on $(X, \Omega)$-decorated rooted forests}
Let $X$ be  a set and $\Omega$ be a nonempty set. We now generalize the cocycle Hopf algebra structure in Eq.~\meqref{eq:cocyc} characterizing the Connes-Kreimer Hopf algebra of rooted trees to an $\Omega$-cocycle Hopf algebra structure on $(X,\Omega)$-decorated rooted forests.

Fix maps 
\begin{equation}
\mu:X\to \bfk, \quad x\mapsto \mu_x, \ x\in X
\mlabel{eq:mumap}
\end{equation}
and
\begin{equation}
 \lambda:\Omega\to \bfk, \quad \omega\mapsto \lambda_\omega, \  \omega\in \Omega.
\mlabel{eq:lambdamap}
\end{equation}
For the new coproduct $\col=\col{}_{\mu,\lambda}$ on $\hrts$, we will define $\col(F)$ for basis elements $F\in \calf_{{\rm RT}}(X,\Omega)$ by induction on the depth, and then extend by linearity.

For the initial step of $\dep(F)=0$, we  define
\begin{equation}
\col(F) :=
\left\{
\begin{array}{ll}
 \etree \ot \etree, & \text{ if } F = \etree, \\
\sbullet_x\ot \etree+\etree \ot \sbullet_x+\mu_x\etree \ot \etree, & \text{ if } F = \sbullet_x \text{ for some } x \in X,\\
\col(\sbullet_{x_1})\cdots\col(\sbullet_{x_m}),
& \text{ if }  F=\sbullet_{x_1}\cdots \sbullet_{x_m} \text{ with } m\geq 2.
\end{array}
\right.
\mlabel{eq:dele}
\end{equation}
For a given $k\geq 0$, assume that $\col(F)$ have been defined for all $F$ with $\dep(F)\leq k$. Consider $F$ with $\dep(F)=k+1$.
First assume that the breadth $\bre(F)$ is one, that is, $F$ is a tree with $\dep(F)\geq 1$.
Then we can write
$F=\pl(\lbar{F})$ for some $\omega\in \Omega$ and $\lbar{F}\in\calf_{{\rm RT}}(X,\Omega)$, and define
\begin{equation}
\col(F)=\col \pl(\lbar{F}) :=\pl(\lbar{F}) \otimes \etree + (\id\otimes \pl)\col(\lbar{F})+ \lambda_\omega \lbar{F} \otimes \etree.
\mlabel{eq:dbp}
\end{equation}
Here $\col(\lbar{F})$ is well-defined by the induction hypothesis.

If $\bre(F) \geq 2$, we write $F=T_{1}T_{2}\cdots T_{m}$ with $m\geq 2$ and $T_1, T_2, \ldots, T_m \in \calt_{{\rm RT}}(X,\Omega)$,
and define
\begin{equation}
\col(F):=\col( T_{1})  \col(T_{2}) \cdots \col(T_{m}),
\mlabel{eq:delee1}
\end{equation}
where $\col( T_{1}),\col(T_{2}), \ldots ,\col(T_{m})$ are given in Eqs.(\mref{eq:dele}) and (\mref{eq:dbp}). This completes the inductive definition of $\col$.

Eq.~\meqref{eq:dbp} means that the coproduct $\col$ satisfies the {\bf Hochschild 1-cocycle condition of weight $\lambda_\Omega$}:
\begin{equation}
	\col \pl= \pl \otimes \etree + (\id\otimes \pl)\col+\lambda_\omega  \id \otimes \etree.
	\mlabel{eq:cdbp}
\end{equation}

\begin{remark}
    \begin{enumerate}
        \item When $\lambda=0$, then Eq.~\meqref{eq:cdbp} reduces to the cocycle condition defined in \mcite{ZGG16}.
        \item When $\Omega$ is a singleton, the Eq.~\meqref{eq:cdbp} becomes the Hochschild 1-cocycle condition of weight $\lambda$ that is introduced in \mcite{ZL25}.
        \item Naturally, when $\lambda=0$ and $\Omega$ is a singleton, we recover the classical Connes-Kreimer cocycle Hopf algebra structure in Eq.~\meqref{eq:cocyc}. 
    \end{enumerate}
\end{remark}

\begin{exam} \mlabel{ex:copex}
    Let $x,y\in X$ and $\alpha,\beta\in\Omega$. Then
\begin{align*}
\col(\tdun{$x$})=&\ \tdun{$x$}\otimes1+1\otimes\tdun{$x$}+\mu_x1\otimes1,\\
\col(\tdun{$\alpha$})
=&\ \tdun{$\alpha$} \otimes1+1\otimes \tdun{$\alpha$}+\lambda_\alpha1\otimes1,\\
\col(\tddeux{$\alpha$}{$x$})
=&\ \tddeux{$\alpha$}{$x$}\otimes1+\tdun{$x$} \otimes \tdun{$\alpha$}+1\otimes\tddeux{$\alpha$}{$x$}+\lambda_\alpha\tdun{$x$} \otimes1+\mu_x1\otimes\tdun{$\alpha$},\\
\col(\tdquatretrois{$\alpha$}{$\beta$}{$x$}{$\gamma$})
=&\ \tdquatretrois{$\alpha$}{$\beta$}{$x$}{$\gamma$}\ot\etree+\tdun{$\gamma$}\ot\tdtroisdeux{$\alpha$}{$\beta$}{$x$}+\mu_x\tdun{$\gamma$}\ot\tddeux{$\alpha$}{$\beta$}
+\tddeux{$\beta$}{$x$}\ot\tddeux{$\alpha$}{$\gamma$}+\lambda_\gamma\tddeux{$\beta$}{$x$}\ot\tdun{$\alpha$}+\tdun{$x$}\ot\tdtroisun{$\alpha$}{$\beta$}{$\gamma$}+\lambda_\beta\tdun{$x$}\ot\tddeux{$\alpha$}{$\gamma$}\\
&+\lambda_\gamma\tdun{$x$}\ot\tddeux{$\alpha$}{$\beta$}+\lambda_\beta\lambda_\gamma\tdun{$x$}\ot\tdun{$\alpha$}+\tdun{$\gamma$}\tddeux{$\beta$}{$x$}\ot\tdun{$\alpha$}+\lambda_\alpha\tdun{$\gamma$}\tddeux{$\beta$}{$x$}\ot\etree+\tdun{$\gamma$}\tdun{$x$}\ot\tddeux{$\alpha$}{$\beta$}+\lambda_\beta\tdun{$\gamma$}\tdun{$x$}\ot\tdun{$\alpha$}\\
&+1\ot\tdquatretrois{$\alpha$}{$\beta$}{$x$}{$\gamma$}+\mu_x 1\ot\tdtroisun{$\alpha$}{$\beta$}{$\gamma$}+\lambda_\gamma 1\ot\tdtroisdeux{$\alpha$}{$\beta$}{$x$}+\lambda_\gamma\mu_x 1\ot\tddeux{$\alpha$}{$\beta$}.
\end{align*}
\end{exam}

Next, We defined a counit $\epl:\hrts\rightarrow\bfk.$ It only needs to define $\epl$ for basis elements $F\in\calf_{{\rm RT}}(X,\Omega)$ by induction on $\dep(F)$, and then extend by linearity. For the initial step of $\dep(F) = 0$, we define
\begin{equation}
\epl(F) :=
\left\{
\begin{array}{ll}
 1_{\bfk}, & \text{ if } F = \etree, \\
-\mu_x1_{\bfk}, & \text{ if } F = \sbullet_x \text{ for some } x \in X,\\
\epl(\sbullet_{x_1})\cdots\epl(\sbullet_{x_m}) &\\
=(-1)^m \mu_{x_1}\cdots\mu_{x_m}\,1_{\bfk},
& \text{ if }  F=\sbullet_{x_1}\cdots \sbullet_{x_m} \text{ with } m\geq 2.
\end{array}
\right.
 \mlabel{eq:dele2}
\end{equation}
For a given $k\geq 0$, assume that $\epl(F)$ has been defined for all $F$ with $\dep(F)\leq k$. Consider $F$ with $\dep(F)=k+1$. When the breadth $\bre(F)=1$, that is, $F$ is a tree with $\dep(F)\geq 1$. 
Then we can write $F = B^+_{\omega}(\lbar{F}),$ for some $\lbar{F}\in\calf_{{\rm RT}}(X,\Omega)$ and $\omega\in\Omega$, and define
\begin{equation}
    \epl(F)=\epl(B^+_{\omega}(\lbar{F})):=-\lambda_\omega \epl(\lbar{F}).
\mlabel{eq:dbp2}
\end{equation}
 If $\bre(F) \geq 2$, we write $F=T_{1}\cdots T_{m}$ with $m \geq 2$ and $T_{1},\ldots,T_{m}\in\calt_{{\rm RT}}(X,\Omega)$, and then define
 \begin{equation}
     \epl(F)=\epl(T_{1})\cdots\epl(T_{m}),
\mlabel{eq:delee2}
 \end{equation}
 where $\epl(T_{1}),\ldots,\epl(T_{m})$ are defined in Eqs.\meqref{eq:dele2} and \meqref{eq:dbp2}.

\subsection{A combinatorial description of \texorpdfstring{$\col$}{col}}
\mlabel{sec:comdes}
In this section, we provide a combinatorial description of the counit $\epl$ and the coproduct $\col$. 

By a direct observation using Eqs.\,\meqref{eq:dele2} and \meqref{eq:dbp2}, we obtain an explicit formula for $\epl(F)$. Let $|F|$ be the degree (number of vertices) of $F\in \calf_{{\rm RT}}(X,\Omega)$. 
Then 
\begin{equation}
	\epl(F)=(-1)^{|F|}\prod_{x\in F} \mu_x \prod_{\omega\in F} 
	\lambda_{\omega}.
	\mlabel{e:counitexp}
\end{equation}
Here the first (resp. second) product runs over vertices of $F$ decorated by $X$ (resp. by $\Omega$) including repetitions.

For an explicit formula of $\col$, we begin by introducing some basic notions.
Define 
\begin{equation}
\begin{aligned}
    \leafset:\sbullet_{X\sqcup\Omega} &\longrightarrow \bfk\sbullet_{X\sqcup\Omega}\oplus\bfk,\\
    \sbullet_x&\mapsto \widetilde{\sbullet_x}:=\sbullet_x+\mu_x,\ \ x\in X,\\    \sbullet_\omega&\mapsto\widetilde{\sbullet_\omega}:=\sbullet_\omega+\lambda_\omega,\ \ \omega\in\Omega.
\mlabel{eq:lvmap}
\end{aligned}
\end{equation}

Next extend $\leafset$ to a linear operator $\leafsp$ on $\bfk \calf_{{\rm RT}}(X,\Omega)$ by applying $\leafset$ to each of the {\it leaf} vertices of a $(X, \Omega)$-decorated rooted forest $F\in \calf_{{\rm RT}}(X,\Omega)$. In other words, define
\begin{equation*}
\leafsp(F):=\widetilde{F},
\end{equation*}
where $\widetilde{F}$ is obtained from $F$ by replacing each leaf $\sbullet_x$ (resp. $\sbullet_\omega$) by $\sbullet_x+\mu_x$ (resp. $\sbullet_\omega+\lambda_\omega$).
We also use the convention that $\leafset(1)=1$.

Here are some examples on the action of $\leafsp$:
\begin{exam}
Let $x,x_1,x_2\in X,\ \alpha,\beta,\gamma\in\Omega$. Then
\begin{align*}
  \widetilde{\tdun{$x$}}&=\leafsp(\tdun{$x$})=\tdun{$x$}+\mu_x,\\ \widetilde{\tdun{$\alpha$}}&=\leafsp(\tdun{$\alpha$})=\tdun{$\alpha$}+\lambda_\alpha,\\
\widetilde{\tdun{$x_1$}\ \tdun{$x_2$}}&=\leafsp(\tdun{$x_1$}\ \tdun{$x_2$}\,)=(\tdun{$x_1$}+\mu_{x_1})(\tdun{$x_2$}+\mu_{x_2})=\tdun{$x_1$}\ \tdun{$x_2$}+\mu_{x_2}\tdun{$x_1$}+\mu_{x_1}\tdun{$x_2$}+\mu_{x_1}\mu_{x_2},\\
\widetilde{\tdun{$\alpha$}\tdun{$\beta$}}&=\leafsp(\tdun{$\alpha$}\tdun{$\beta$})=(\tdun{$\alpha$}+\lambda_\alpha)(\tdun{$\beta$}+\lambda_\beta)=\tdun{$\alpha$}\tdun{$\beta$}+\lambda_\beta\tdun{$\alpha$}+\lambda_\alpha\tdun{$\beta$}+\lambda_\alpha\lambda_\beta,\\
\widetilde{\tdun{$x$}\tdun{$\alpha$}}&=\leafsp(\tdun{$x$}\tdun{$\alpha$})=(\tdun{$x$}+\mu_x)(\tdun{$\alpha$}+\lambda_\alpha)=\tdun{$x$}\tdun{$\alpha$}+\lambda_\alpha\tdun{$x$}+\mu_x\tdun{$\alpha$}+\mu_x\lambda_\alpha,\\
\widetilde{\tdquatretrois{$\alpha$}{$\beta$}{$x$}{$\gamma$}}&=\leafsp(\,\tdquatretrois{$\alpha$}{$\beta$}{$x$}{$\gamma$}{})=\qquad\tddquatretrois{$\alpha$}{$\beta$}{$(\tdun{$x$}+\mu_x)$}{$(\tdun{$\gamma$}+\lambda_\gamma)$}\qquad=\tdquatretrois{$\alpha$}{$\beta$}{$x$}{$\gamma$}+\mu_x \tdtroisun{$\alpha$}{$\beta$}{$\gamma$}+\lambda_\gamma \tdtroisdeux{$\alpha$}{$\beta$}{$x$}+\lambda_\gamma\mu_x \tddeux{$\alpha$}{$\beta$}.
  \end{align*}
\end{exam}

As in \mcite{Guo,ZGG}, a {\bf subforest} $F'$ of a $(X, \Omega)$-decorated rooted tree $T$ or, more generally, a rooted forest $F$ consists of a set of vertices of $F$ together with their descendants and edges connecting all these vertices. Let $\calf_{F}$ be the set of subforests of $F$, including the empty tree 1 and the full forest $F$. The quotient $F/F'$ is obtained by removing the subforest $F'$ and the edges connecting the subforest to the rest of $F$. By convention, if $F'=F$, then $F/F'=\etree$, and if $F'=\etree$, then $F/F'=F$.

Then the coproduct of Connes-Kreimer Hopf algebra of rooted trees is given by
\begin{align*}
    \Delta_{CK}(F)=\sum_{G\in\calf_F}G\ot(F/G),
\end{align*}
where $\calf_F$ is the set of subforests of a rooted forest $F$.
The coproduct $\col$ has a similar formula. 
\begin{theorem}
\mlabel{thm:com}
For every $(X, \Omega)$-decorated rooted forest $F\in\calf_{{\rm RT}}(X,\Omega)$, we have
\begin{equation}
    \col(F)=\sum_{G\in\calf_F}G\ot\widetilde{F/G}.
\mlabel{eq:ccd}
\end{equation}
\end{theorem}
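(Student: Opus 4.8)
The plan is to prove Eq.~\eqref{eq:ccd} by induction on the depth $\dep(F)$, showing that the right-hand side $R(F):=\sum_{G\in\calf_F}G\ot\widetilde{F/G}$ satisfies the same recursion (Eqs.~\eqref{eq:dele}, \eqref{eq:dbp} and \eqref{eq:delee1}) that defines $\col$. The argument rests on two structural observations. First, $R$ is multiplicative: for a concatenation $F=F_1F_2$ the subforests of $F$ are exactly the products $G_1G_2$ with $G_i\in\calf_{F_i}$, the quotient factors as $(F_1F_2)/(G_1G_2)=(F_1/G_1)(F_2/G_2)$ (see \mcite{Guo,ZGG}), and $\leafsp$ is an algebra homomorphism because it acts leaf by leaf; with the componentwise product on $\hrts\ot\hrts$ these give $R(F_1F_2)=R(F_1)R(F_2)$. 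Since $\col$ is multiplicative by Eqs.~\eqref{eq:dele} and \eqref{eq:delee1}, it is enough to establish $R(F)=\col(F)$ for trees, and in the inductive step I will use multiplicativity to transfer the identity from trees of depth $\le k$ to the forest appearing under the root.

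For the base case $\dep(F)=0$ I check directly that $R(\etree)=\etree\ot\etree$ and, for $F=\sbullet_x$, that the only subforests $G=\etree$ and $G=\sbullet_x$ contribute $\etree\ot(\sbullet_x+\mu_x)$ and $\sbullet_x\ot\etree$, reproducing Eq.~\eqref{eq:dele}.

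The core of the proof is the inductive step for a tree $F=\pl(\lbar F)$ of depth $k+1$, where $\lbar F$ is a forest of depth $k$. I split $\calf_F$ according to whether the $\omega$-decorated root lies in $G$. If it does, then $G$ must absorb all descendants and hence $G=F$, contributing $F\ot\etree$, the first term of Eq.~\eqref{eq:dbp}. Otherwise $G$ ranges precisely over $\calf_{\lbar F}$ and, the root being retained, $F/G=\pl(\lbar F/G)$. The decisive point is the interaction of $\leafsp$ with $\pl$: as long as $\lbar F/G\neq\etree$ the root $\omega$ stays an internal vertex and $\leafsp\big(\pl(\lbar F/G)\big)=\pl(\widetilde{\lbar F/G})$, whereas in the single exceptional case $G=\lbar F$ one has $\pl(\lbar F/G)=\sbullet_\omega$, whose unique vertex is now a \emph{leaf}, so that $\leafsp$ contributes the extra scalar $\lambda_\omega$. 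Summing over $\calf_{\lbar F}$ therefore yields
\begin{equation*}
\sum_{G\in\calf_{\lbar F}}G\ot\leafsp\big(\pl(\lbar F/G)\big)
=(\id\ot\pl)\Big(\sum_{G\in\calf_{\lbar F}}G\ot\widetilde{\lbar F/G}\Big)+\lambda_\omega\,\lbar F\ot\etree
=(\id\ot\pl)\col(\lbar F)+\lambda_\omega\,\lbar F\ot\etree,
\end{equation*}
the last equality being the induction hypothesis for $\lbar F$ (which holds by multiplicativity from the tree case at depths $\le k$). Adding back the $G=F$ term recovers exactly Eq.~\eqref{eq:dbp}, so $R(F)=\col(F)$.

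The main obstacle is this final case analysis. One must first justify $F/G=\pl(\lbar F/G)$ for $G\in\calf_{\lbar F}$, and then, more delicately, observe that $\leafsp$ commutes with $\pl$ in \emph{every} case except $G=\lbar F$; it is exactly the $\omega$-leaf created there that produces the weight term $\lambda_\omega\,\lbar F\ot\etree$ forced by the weighted cocycle condition Eq.~\eqref{eq:cdbp}. Once this bookkeeping is settled, multiplicativity propagates the identity from trees to arbitrary forests and the induction closes.
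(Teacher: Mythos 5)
Your proof is correct and takes essentially the same route as the paper's: induction on depth, multiplicativity (of both $\col$ and the combinatorial sum) to reduce to the tree case, and for a tree $F=\pl(\lbar{F})$ the identification of the subforests $G\neq F$ with subforests of $\lbar{F}$ satisfying $F/G=\pl(\lbar{F}/G)$, matched against the weighted cocycle recursion Eq.~\eqref{eq:dbp}. Your explicit analysis of the exceptional case $G=\lbar{F}$, where $\pl(\lbar{F}/G)=\sbullet_\omega$ has its root turned into a leaf so that $\leafsp$ produces the extra term $\lambda_\omega\,\lbar{F}\ot\etree$, is precisely the bookkeeping the paper absorbs implicitly in the step passing from $T\ot\etree+\lambda_\omega\lbar{F}\ot\etree+\sum_{G\in\calf_{\lbar{F}}}G\ot\pl(\widetilde{\lbar{F}/G})$ to $T\ot\etree+\sum_{G\in\calf_{\lbar{F}}}G\ot\widetilde{T/G}$.
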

\begin{proof}  
To prove Eq.~\meqref{eq:ccd}, we will proceed by induction on $\dep(F)\geq0$. For the initial step of $\dep(F)=0$, we have $F=\etree$ or $\sbullet_{x_1}\cdots\sbullet_{x_m}$ for $x_i\in X$, $i=1,\ldots,m$ and $m\geq 1$. If $F=\etree$, then Eq.~\meqref{eq:ccd} follows from Eq.~\meqref{eq:dele}. If $F=\sbullet_{x_1}\cdots\sbullet_{x_m}$, then for $1\leq i\leq m$, by Eq.~\meqref{eq:dele} we have
$$
\col(\sbullet_{x_i})
=\sbullet_{x_i}\ot\etree+\etree\ot(\sbullet_{x_i}+\mu_{x_i}) \\
=\sbullet_{x_i}\ot\etree+\etree\ot\widetilde{\sbullet_{x_i}} \\
=\sum_{G\in\calf_{\sbullet_{x_i}}}G\ot\widetilde{\sbullet_{x_i}/G}.$$
Note that every subforest of a forest $T_1\cdots T_m$ is of the form $G_1\cdots G_m$ for subforests $G_i$ of $T_i$. Thus, 
\begin{align*}
\col(F)
&=\col(\sbullet_{x_1})\cdots\col(\sbullet_{x_m}) \quad\text{(by Eq.~\meqref{eq:dele})}\\
&=\prod_{i=1}^m \bigg(\sum_{G_i\in\calf_{\sbullet_{x_i}}}G_i\ot\widetilde{\sbullet_{x_i}/G_i} \bigg)\\
&=\sum_{G_1\in\calf_{\sbullet_{x_1}},\ldots,G_m\in\calf_{\sbullet_{x_m}}}\bigg(\prod_{i=1}^m G_i\ot\prod_{i=1}^m\widetilde{\sbullet_{x_i}/G_i} \bigg)\\
&=\sum_{G\in\calf_{\sbullet_{x_1}\cdots\sbullet_{x_m}}}G\ot\widetilde{(\sbullet_{x_1}\cdots\sbullet_{x_m})/G}.
\end{align*}

For the inductive step, fix a $k\geq 0$ and  assume that Eq.~\meqref{eq:ccd} holds when $0\leq\dep(F)\leq k$. Consider the case when $\dep(F)=k+1\geq 1$. If $F=T$ is a rooted tree, then we can write $T=\pl(\lbar{F})$, for some $\lbar{F}\in\calf_{{\rm RT}}(X,\Omega)$ and $\omega\in\Omega$, where $\lbar{F}$ has depth $k$. Note that if a subforest $G$ of $T$ is not $T$ itself, then it is a subforest of $\lbar{F}$ and $T/G=\pl(\lbar{F}/G)$. Then we can apply the induction hypothesis and obtain 
\begin{align*}
	\col(T)=&\col \pl(\lbar{F})\\
	=&\pl(\lbar{F}) \otimes \etree+ \lambda_\omega \lbar{F} \otimes \etree + (\id\otimes \pl)\col(\lbar{F})\quad\text{(by Eq.~\meqref{eq:dbp})}\\
	=&T\ot\etree+\lambda_\omega \lbar{F} \otimes \etree +(\id\otimes \pl)\bigg(\sum_{ G\in\calf_{\lbar{F}}}G\ot\widetilde{\lbar{F}/G}\bigg) \quad\text{(by induction hypothesis)}\\
	=&T\ot\etree+\lambda_\omega \lbar{F} \otimes \etree +\bigg(\sum_{G\in\calf_{\lbar{F}}}G\ot\pl(\widetilde{\lbar{F}/G})\bigg)\\
	=&T\ot\etree+\sum_{G\in\calf_{\lbar{F}}}G\ot\widetilde{T/G}\\
	=&\sum_{G\in\calf_T}G\ot\widetilde{T/G}.
\end{align*}

Finally let $F=T_1\cdots T_n$ with rooted trees $T_1,\ldots,T_n$, $n\geq 2$. Then applying the previous step,  
\begin{align*}
    \col(F)&=\col(T_1\cdots T_n)\\
    &=\col(T_1)\cdots\col(T_n)\quad\text{(by Eq.~\meqref{eq:delee1})}\\
    &=\prod_{i=1}^n\bigg(\sum_{G_i\in\calf_{T_i}}G_i\ot\widetilde{T_i/G_i} \bigg)\\ 
    &=\sum_{G_1\in\calf_{T_1},\ldots,G_n\in\calf_{T_n}}\bigg(\prod_{i=1}^nG_i\ot\prod_{i=1}^n\widetilde{T_i/G_i} \bigg)\\
    &=\sum_{G\in\calf_{T_1\cdots T_n}}G\ot\widetilde{T_1\cdots T_n/G}.
\end{align*}

This completes the proof of Eq.~\meqref{eq:ccd}.
  \end{proof}

\begin{exam} We give an example to compare the Connes-Kreimer coproduct and our coproduct. 
    Let $T=\tdquatretrois{$\alpha$}{$\beta$}{$x$}{$\gamma$}\in\calt_{{\rm RT}}(X,\Omega)$, we have\vspace{.2cm} \\
\begin{tabular}{|c|c|c|c|c|c|c|c|}
\hline
\makecell{sub-\\forest\\ G} & \tdquatretrois{$\alpha$}{$\beta$}{$x$}{$\gamma$} & \tdun{$\gamma$} & \tdun{$x$} & \tddeux{$\beta$}{$x$} & \tdun{$\gamma$}\tdun{$x$} & \tdun{$\gamma$}\tddeux{$\beta$}{$x$} & \etree \\
\hline
T/G & \etree & \tdtroisdeux{$\alpha$}{$\beta$}{$x$} & \tdtroisun{$\alpha$}{$\beta$}{$\gamma$} & \tddeux{$\alpha$}{$\gamma$} & \tddeux{$\alpha$}{$\beta$} & \tdun{$\alpha$} & \tdquatretrois{$\alpha$}{$\beta$}{$x$}{$\gamma$} \\
\hline
$\widetilde{T/G}$ & $\etree$ & $\tdtroisdeux{$\alpha$}{$\beta$}{$x$}+\mu_x\tddeux{$\alpha$}{$\beta$}$ & $\makecell{\tdtroisun{$\alpha$}{$\beta$}{$\gamma$}+\lambda_\beta\tddeux{$\alpha$}{$\gamma$}\\+\lambda_\gamma\tddeux{$\alpha$}{$\beta$}+\lambda_\beta\lambda_\gamma\tdun{$\alpha$}}$ & $\tddeux{$\alpha$}{$\gamma$}+\lambda_\gamma\tdun{$\alpha$}$ & $\tddeux{$\alpha$}{$\beta$}+\lambda_\beta\tdun{$\alpha$}$ & $\tdun{$\alpha$}+\lambda_\alpha$ & $\makecell{\tdquatretrois{$\alpha$}{$\beta$}{$x$}{$\gamma$}+\mu_x \tdtroisun{$\alpha$}{$\beta$}{$\gamma$}\\+\lambda_\gamma \tdtroisdeux{$\alpha$}{$\beta$}{$x$}+\lambda_\gamma\mu_x \tddeux{$\alpha$}{$\beta$}}$ \\
\hline
\end{tabular}
\smallskip

Then
$$ \Delta_{CK}(T)= \sum_{G\in \calf_T} G\otimes T/G$$
as $F$ runs through the above table, and 
$$ \Delta_{{\rm RT}}(T)=\sum_{G\in \calf_T} G\otimes \widetilde{T/G}.$$
After expanding $\widetilde{T/G}$ as in the table, this coproduct coincides with the one in Example\,\mref{ex:copex}. 
\end{exam}

\section{\texorpdfstring{$\Omega$}{Omega}-cocycle Hopf algebra of $(X, \Omega)$-decorated rooted forests}
\mlabel{sec:cocyhopf}

In this section we establish the $\Omega$-cocycle Hopf algebra structure on $\hrts$. We start with the bialgebra property in Section~\mref{ss:bialg}. Then a connected filtered structure is introduced in Section~\mref{ss:hopf} which provides the antipode. In Section~\mref{ss:free}, this Hopf algebra is shown to satisfy a universal property among the newly introduced $\Omega$-cocycle Hopf algebras. 

\subsection{A bialgebra structure on $(X, \Omega)$-decorated rooted trees}
\mlabel{ss:bialg}
Given a positive integer $m$, denote by $[m]$ the set $\{1,2,\ldots,m\}$. 
For a subset $I=\{i_1,\ldots,i_k\}\subseteq [m]$, denote $\sbullet{x_{I}}:=\sbullet_{x_{i_{1}}}\sbullet_{x_{i_{2}}}\cdots\sbullet_{x_{i_{k}}}$.
With the convention that $\sbullet_{x_\emptyset}=1$ 
and denote $\mu_{x_I}:=\mu_{x_{i_{1}}}\mu_{x_{i_{2}}}\cdots\mu_{x_{i_{k}}}$ with the convention that $\mu_{x_\emptyset}=1$.

For later application, we give another formulation of Eq.~\meqref{eq:ccd} in a special case. 
 \begin{lemma}\mlabel{lem:col}
Let $x_{1},\ldots,x_{m}\in X$, and let $\sbullet_{x_1}\cdots\sbullet{x_m}\in \hrts$ with $\mathrm{m}\geq1$. Let $\col$ be as above. Then
$$\col(\sbullet_{x_1}\cdots\sbullet{x_m})=\sum_{I\sqcup J=[m]}\mu_{x_I}\bigg(\sum_{K\sqcup L=J}\sbullet_{x_{K}}\,\otimes\sbullet_{x_{L}}\,\bigg).$$
 \end{lemma}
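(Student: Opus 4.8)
The plan is to read the identity off directly from the combinatorial description of $\col$ established in Theorem~\mref{thm:com}, the one remaining task being a reindexing of the resulting triple sum.

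First I would describe the subforests of $F=\sbullet_{x_1}\cdots\sbullet_{x_m}$. Since each factor $\sbullet_{x_i}$ is a single leaf vertex with no descendants, every subforest of $F$ is obtained simply by selecting a subset of these vertices; that is, $\calf_F=\{\sbullet_{x_K}\mid K\subseteq[m]\}$, and the corresponding quotient is the complementary product $F/\sbullet_{x_K}=\sbullet_{x_{[m]\setminus K}}$. Substituting into Eq.~\meqref{eq:ccd} gives
\[
\col(F)=\sum_{K\subseteq[m]}\sbullet_{x_K}\ot\widetilde{\sbullet_{x_{[m]\setminus K}}}.
\]

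Next I would expand the operator $\leafsp$. Every vertex of the forest $\sbullet_{x_{[m]\setminus K}}$ is a leaf decorated by $X$, so by Eq.~\meqref{eq:lvmap} the map $\leafsp$ replaces each $\sbullet_{x_i}$ by $\sbullet_{x_i}+\mu_{x_i}$; expanding the product over $i\in[m]\setminus K$ yields
\[
\widetilde{\sbullet_{x_{[m]\setminus K}}}=\sum_{L\subseteq[m]\setminus K}\mu_{x_{([m]\setminus K)\setminus L}}\,\sbullet_{x_L}.
\]
Combining the two displays expresses $\col(F)$ as a sum, over pairs $(K,L)$ with $K\subseteq[m]$ and $L\subseteq[m]\setminus K$, of the terms $\mu_{x_{([m]\setminus K)\setminus L}}\,\sbullet_{x_K}\ot\sbullet_{x_L}$.

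Finally I would reindex. Setting $I:=([m]\setminus K)\setminus L$ records each summand as an ordered partition $[m]=I\sqcup K\sqcup L$ into three blocks, the block $I$ carrying the scalar $\mu_{x_I}$, the block $K$ on the left tensor factor and $L$ on the right. Writing $J:=K\sqcup L=[m]\setminus I$ regroups this family of triples exactly as first choosing $I\sqcup J=[m]$ and then splitting $K\sqcup L=J$, which is precisely the right-hand side of the claimed identity. The only point needing care—and it is bookkeeping rather than a genuine obstacle—is verifying that the index set $\{(K,L):K\subseteq[m],\,L\subseteq[m]\setminus K\}$ matches the index set $\{(I,K,L):I\sqcup K\sqcup L=[m]\}$ under this correspondence; both simply enumerate the ordered partitions of $[m]$ into three (possibly empty) blocks. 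As an alternative to invoking Theorem~\mref{thm:com}, one can argue by induction on $m$ using the multiplicativity $\col(\sbullet_{x_1}\cdots\sbullet_{x_m})=\col(\sbullet_{x_1})\cdots\col(\sbullet_{x_m})$ from Eq.~\meqref{eq:dele}, with the base case $m=1$ immediate from the definition of $\col(\sbullet_x)$; but the route through Theorem~\mref{thm:com} is the more transparent.
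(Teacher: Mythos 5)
Your proposal is correct and follows essentially the same route as the paper's own proof: apply Eq.~\eqref{eq:ccd} from Theorem~\ref{thm:com}, identify the subforests of $\sbullet_{x_1}\cdots\sbullet_{x_m}$ with subsets of $[m]$, expand $\leafsp$ on the complementary quotient, and reindex the resulting triple sum over ordered partitions $[m]=I\sqcup K\sqcup L$. The paper performs exactly this reindexing (with $U,V,W,Z$ in place of your $K$, $[m]\setminus K$, $L$, $I$), so the two arguments differ only in notation.
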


\begin{proof} 
Applying Eq.~\meqref{eq:ccd} to $F=\sbullet_{x_1}\cdots\sbullet{x_m}$ gives 
\begin{align*}
\col(F) &=\sum_{G\in\calf_{\sbullet_{x_1}\cdots\sbullet_{x_m}}}
G\ot\widetilde{(\sbullet_{x_1}\cdots\sbullet_{x_m})/G}	\\
&=\sum_{U\sqcup V=[m]} \sbullet_{x_U} \ot \widetilde{\sbullet_{x_V}}	\\
&=\sum_{U\sqcup V=[m]} \sbullet_{x_U} \ot \prod_{i\in V} (\sbullet_{x_i}+\mu_{x_i})\\
&= \sum_{U\sqcup V=[m]} \sbullet_{x_U} \ot \sum_{W\sqcup Z=V} \mu_{x_Z} \sbullet_{x_W}\\
&=\sum_{U\sqcup W\sqcup Z=[m]} \mu_{x_Z} \sbullet_{x_U}\ot\sbullet_{x_W}.
\end{align*} 
Then taking $I=Z, J=U\sqcup W, K=U, L=W$ yields the lemma. 
\end{proof}

We first check the compatibility of the coproduct on $\hrts$ with the product. 

\begin{lemma}\mlabel{lem:colh}
 The linear map $\col:\hrts\rightarrow \hrts\ot\hrts$ is an algebra homomorphism.
\end{lemma}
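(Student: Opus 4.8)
The plan is to exploit the fact that $\col$ was \emph{defined} to be multiplicative across the tree factorization of a forest, so that the homomorphism property is already built into the construction and only needs to be extracted from the case-split in Eqs.~\meqref{eq:dele}, \meqref{eq:dbp} and \meqref{eq:delee1}. Since $\hrts=\bfk M(\calt_{{\rm RT}}(X,\Omega))$ is the free $\bfk$-module on the free monoid generated by trees, it suffices to verify two things: that $\col$ preserves the unit, namely $\col(\etree)=\etree\ot\etree$ (which is the first line of Eq.~\meqref{eq:dele}); and that $\col(FG)=\col(F)\col(G)$ on basis forests $F,G\in\calf_{{\rm RT}}(X,\Omega)$, after which bilinearity extends the claim to all of $\hrts$.

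The key step I would isolate first is the structural identity that, for every forest written uniquely as a concatenation of trees $F=T_1\cdots T_m$ (with $m\geq 0$, the empty product being $\etree$),
\begin{equation*}
\col(F)=\col(T_1)\col(T_2)\cdots\col(T_m).
\end{equation*}
I expect this to follow directly from the defining equations rather than from a separate induction: the case $m=0$ is the first line of Eq.~\meqref{eq:dele}; the case $m=1$ is vacuous; when $m\geq 2$ and $\dep(F)=0$ (all $T_i$ being leaves decorated by $X$) it is the third line of Eq.~\meqref{eq:dele}; and when $m\geq 2$ and $\dep(F)\geq 1$ it is precisely Eq.~\meqref{eq:delee1}. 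The products here are computed in $\hrts\ot\hrts$ with the componentwise multiplication $(a\ot b)(c\ot d)=ac\ot bd$, matching the concatenation product on each tensor factor.

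With this identity in hand the homomorphism property is a one-line deduction. Writing $F=T_1\cdots T_m$ and $G=S_1\cdots S_n$ as concatenations of trees, the product $FG$ has concatenated tree factorization $T_1\cdots T_m S_1\cdots S_n$, so
\begin{equation*}
\col(FG)=\col(T_1)\cdots\col(T_m)\,\col(S_1)\cdots\col(S_n)=\col(F)\,\col(G),
\end{equation*}
using associativity of the componentwise product on $\hrts\ot\hrts$; the degenerate cases $F=\etree$ or $G=\etree$ are absorbed via the empty-product convention.

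I do not anticipate a genuine obstacle, since multiplicativity is designed into the recursion defining $\col$; the only points requiring care are bookkeeping ones. Specifically, I would confirm that the depth-$0$ forests (governed by Eq.~\meqref{eq:dele}) together with the positive-depth forests of breadth $\geq 2$ (governed by Eq.~\meqref{eq:delee1}) exhaust every multi-tree forest, so that the structural identity holds uniformly across the case split; and I would note explicitly that the factorization $F=T_1\cdots T_m$ is \emph{unique} because $\calf_{{\rm RT}}(X,\Omega)$ is freely generated as a monoid by $\calt_{{\rm RT}}(X,\Omega)$, which is what makes $\col(F)$ unambiguous and legitimizes the product formula.
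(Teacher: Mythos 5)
Your proposal is correct and takes essentially the same approach as the paper: both proofs extract multiplicativity from the definitional identity $\col(T_1\cdots T_m)=\col(T_1)\cdots\col(T_m)$ (third line of Eq.~\meqref{eq:dele} together with Eq.~\meqref{eq:delee1}), treat the empty forest via $\col(\etree)=\etree\ot\etree$, and conclude by concatenating the tree factorizations of $F$ and $G$. Your additional remarks on the uniqueness of the tree factorization and the exhaustiveness of the depth-$0$ versus positive-depth case split merely make explicit what the paper's proof leaves implicit.
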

\begin{proof}
We only need to prove
\begin{align}
\mlabel{eq:chom}
\col(FF')=\col(F)\col(F'),\quad \text{for all}\   F,F'\in\calf_{{\rm RT}}(X,\Omega).
\end{align}
If $F=1$ or $F'=1$, without loss of generality, we take $F'=1$. Then by Eq.~\meqref{eq:dele}, we have $\col(F')=1\ot1$, and so $$\col(FF')=\col(F)=\col(F)(1\ot1)=\col(F)\col(F').$$

If $F\neq 1$ and $F'\neq 1$, write
$F=T_{1}\cdots T_{m}$ and $F'=T'_{1}\cdots T'_{m'}$, where $m,m'\geq1$ and $T_{i},T'_{j}\in\calt_{{\rm RT}}(X,\Omega)$ for $1\leq i\leq m$ and $1\leq j\leq m'$. By Eq.~(\mref{eq:delee1}) we have
\begin{align*}
\col(FF')&=\col(T_{1}\cdots T_{m}T'_{1}\cdots T'_{m'})\\
&=\col(T_{1})\cdots\col(T_{m})\col(T'_{1})\cdots\col(T'_{m'})\\
&=\col(T_{1}\cdots T_{m})\col(T'_{1}\cdots T'_{m'})\\
&=\col(F)\col(F'),
\end{align*}
as desired.
\end{proof}

\begin{lemma}\mlabel{lem:eplh}
The linear map $\epl:\hrts\rightarrow\bfk$ is an algebra homomorphism.
\end{lemma}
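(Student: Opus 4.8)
The plan is to verify the two defining properties of a unital algebra homomorphism: that $\epl$ sends the unit $\etree$ to $1_\bfk$, and that $\epl$ is multiplicative, $\epl(FF')=\epl(F)\epl(F')$. The unit property is immediate from the top line of Eq.~\meqref{eq:dele2}. For multiplicativity, since $\epl$ is $\bfk$-linear and the concatenation product sends a pair of basis forests to a single basis forest, it suffices by bilinearity to check the identity on basis elements $F,F'\in\calf_{{\rm RT}}(X,\Omega)$.

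My preferred route uses the closed formula already recorded in Eq.~\meqref{e:counitexp}, namely $\epl(F)=(-1)^{|F|}\prod_{x\in F}\mu_x\prod_{\omega\in F}\lambda_\omega$. The key observation is purely combinatorial: the vertex set of the concatenation $FF'$ is the disjoint union of the vertex sets of $F$ and $F'$, each vertex keeping its decoration. Hence $|FF'|=|F|+|F'|$, and the multiset of $X$-decorated (resp.\ $\Omega$-decorated) vertices of $FF'$ is the union of the corresponding multisets of $F$ and of $F'$. Splitting the sign and the two products along this disjoint union then yields $\epl(FF')=\epl(F)\epl(F')$ in a single line, and the empty-forest case is absorbed automatically by the empty-product convention.

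Alternatively, one can argue directly from the recursive definition, in exact parallel with the proof of Lemma~\mref{lem:colh}. If $F=\etree$ or $F'=\etree$ the identity reduces to $\epl(\etree)=1_\bfk$. Otherwise write $F=T_1\cdots T_m$ and $F'=T'_1\cdots T'_{m'}$ with $T_i,T'_j\in\calt_{{\rm RT}}(X,\Omega)$; since $FF'=T_1\cdots T_m T'_1\cdots T'_{m'}$ has breadth $m+m'\geq 2$, Eq.~\meqref{eq:delee2} gives $\epl(FF')=\epl(T_1)\cdots\epl(T_m)\epl(T'_1)\cdots\epl(T'_{m'})$, and regrouping the factors recovers $\epl(F)\epl(F')$ via Eq.~\meqref{eq:delee2} (trivially so when $F$ or $F'$ is a single tree).

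There is no genuine obstacle here; the lemma is a bookkeeping verification that the two halves of the product structure do not interact. The only point deserving care is the treatment of the unit $\etree$: the recursive route must isolate it as a base case because Eq.~\meqref{eq:delee2} applies only to forests of breadth at least one, whereas the formula route handles it through the empty-product convention. For this reason I would present the formula-based argument as the main proof, since it is shortest and avoids any induction altogether.
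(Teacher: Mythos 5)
Your proposal is correct, but your preferred argument is genuinely different from the paper's. The paper proves Lemma~\mref{lem:eplh} by the recursive route: it simply remarks that multiplicativity $\epl(FF')=\epl(F)\epl(F')$ follows by the same case analysis as Lemma~\mref{lem:colh} --- handle $F=\etree$ or $F'=\etree$ separately, then write both forests as products of trees and regroup the factors via Eq.~\meqref{eq:delee2}. This is exactly your second, ``alternative'' argument, including the base-case caveat you flag. Your main proof instead invokes the closed formula of Eq.~\meqref{e:counitexp} and splits the sign $(-1)^{|FF'|}$ and the two decoration products along the disjoint union of vertex sets; this is shorter and induction-free, and it is legitimate since the formula is recorded in Section~\mref{sec:comdes}, before the lemma. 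The trade-off worth noting is that Eq.~\meqref{e:counitexp} is itself only justified in the paper ``by a direct observation'' from Eqs.~\meqref{eq:dele2}--\meqref{eq:dbp2}; making that observation rigorous requires an induction on depth and breadth whose forest step is essentially the multiplicativity you are trying to prove. So your formula-based proof is clean as a presentation choice, but it relocates rather than eliminates the inductive work, whereas the paper's (and your alternative) recursive argument is self-contained.
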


\begin{proof}
We just need to show
\begin{align}
\mlabel{eq:ehom}
\epl(FF')=\epl(F)\epl(F'),\quad \text{for all}\ F,F'\in\calf_{{\rm RT}}(X,\Omega).
\end{align}
This follows from an argument similar to the one for Lemma~\mref{lem:colh}. 
\end{proof}

We now check the coassociativity of $\col$, beginning with a special case. 

\begin{lemma}\mlabel{lem:coass1}
Let $F=\sbullet x_{1}\cdots\sbullet x_{m}\in\hrts, m\geq1$. Then
\begin{equation}
    (\col\ot\id)\col(F)=(\id\ot\col)\col(F).
\mlabel{eq:coasse1}
\end{equation}
\begin{proof}
We apply induction on $m=\bre(F) \geq1$ to prove Eq.~\meqref{eq:coasse1}. If $m = 1$, then $F = \sbullet_{x_{1}}$
for some $x_{1} \in X$. Thus
\begin{align*}
&(\col\ot\id)\col(F)\\
&=(\col\ot\id)\col(\sbullet_{x_{1}})\\
&=(\col\ot\id)(\sbullet_{x_{1}}\ot1+1\ot\sbullet_{x_{1}}+\mu_{x_1}1\ot1)\\
&=\col(\sbullet_{x_{1}})\ot1+\col(1)\ot\sbullet_{x_{1}}+\mu_{x_1}\col(1)\ot1\\
&=\sbullet_{x_{1}}\ot1\ot1+1\ot\sbullet_{x_{1}}\ot1+\mu_{x_1}1\ot1\ot1+1\ot1\ot\sbullet_{x_{1}}+\mu_{x_1}1\ot1\ot1\\
&=(\id\ot\col)\col(F).
\end{align*}
Assume that Eq.~\meqref{eq:coasse1} is true for $1\leq m\leq i$, for a given $i\geq 1$. Consider the case when $m=i+1\geq 2$. Then we have $F=F'\sbullet_{x_{i+1}}$, where $F'=\sbullet_{x_{1}}\cdots\sbullet_{x_{i}}$ and $\bre(F')=i$.
By Lemma ~\mref{lem:col}, we have
$$\col(F')=\sum_{I\sqcup J=[i]}\mu_{x_I}\bigg(\sum_{K\sqcup L=J}\sbullet_{x_{K}}\otimes\sbullet_{x_{L}}\bigg).$$
By the induction hypothesis, we obtain $(\col\ot\id)\col(F')=(\id\ot\col)\col(F')$, that is
$$\sum_{I\sqcup J=[i]}\mu_{x_I}\bigg(\sum_{K\sqcup L=J}\col(\sbullet_{x_{K}})\otimes\sbullet_{x_{L}}\bigg)=\sum_{I\sqcup J=[i]}\mu_{x_I}\bigg(\sum_{K\sqcup L=J}\sbullet_{x_{K}}\otimes\col(\sbullet_{x_{L}})\bigg). $$
Then
\begin{align*}
&(\col\ot\id)\col(F)\\
&=(\col\ot\id)(\col(F')\col(\sbullet_{x_{i+1}}))\\
&=(\col\ot\id)\left(\bigg(\sum_{I\sqcup J=[i]}\mu_{x_I}\big(\sum_{K\sqcup L=J}\sbullet_{x_{K}}\otimes\sbullet_{x_{L}}\big)\bigg)(\sbullet_{x_{i+1}}\ot \etree+\etree \ot \sbullet_{x_{i+1}}+\mu_{x_{i+1}}\etree \ot \etree)\right)\\
&=(\col\ot\id)\left(\sum_{I\sqcup J=[i]}\mu_{x_I}\bigg(\sum_{K\sqcup L=J}\sbullet_{x_K}\sbullet_{x_{i+1}}\ot\sbullet_{x_L}+\sbullet_{x_K}\ot\sbullet_{x_L}\sbullet_{x_{i+1}}+\mu_{x_{i+1}}\sbullet_{x_{K}}\otimes\sbullet_{x_{L}}\bigg)\right)\\
&=\sum_{I\sqcup J=[i]}\mu_{x_I}\bigg(\sum_{K\sqcup L=J}\col(\sbullet_{x_K})\col(\sbullet_{x_{i+1}})\ot\sbullet_{x_L}+\col(\sbullet_{x_K})\ot\sbullet_{x_L}\sbullet_{x_{i+1}}+\mu_{x_{i+1}}\col(\sbullet_{x_{K}})\otimes\sbullet_{x_{L}}\bigg)\\
&=\left(\sum_{I\sqcup J=[i]}\mu_{x_I}\bigg(\sum_{K\sqcup L=J}\col(\sbullet_{x_{K}})\otimes\sbullet_{x_{L}}\bigg)\right)(\col(\sbullet_{x_{i+1}})\ot\etree+\etree\ot\etree\ot\sbullet_{x_{i+1}}+\mu_{x_{i+1}}\etree\ot\etree\ot\etree)\\
&=\left(\sum_{I\sqcup J=[i]}\mu_{x_I}\bigg(\sum_{K\sqcup L=J}\sbullet_{x_{K}}\otimes\col(\sbullet_{x_{L}})\bigg)\right)(\sbullet_{x_{i+1}}\ot\etree\ot\etree+\etree\ot\col(\sbullet_{x_{i+1}})+\mu_{x_{i+1}}\etree\ot\etree\ot\etree)\\
&\hspace{8cm}\text{(by induction hypothesis)}\\
&=\sum_{I\sqcup J=[i]}\mu_{x_I}\bigg(\sum_{K\sqcup L=J}\sbullet_{x_K}\sbullet_{x_{i+1}}\ot\col(\sbullet_{x_L})+\sbullet_{x_K}\ot\col(\sbullet_{x_L})\col(\sbullet_{x_{i+1}})+\mu_{x_{i+1}}\sbullet_{x_{K}}\otimes\col(\sbullet_{x_{L}})\bigg)\\
&=(\id\ot\col)\col(F).
\end{align*}
This completes the inductive proof.
\end{proof}
\end{lemma}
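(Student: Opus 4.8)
The plan is to avoid induction altogether and instead compute both sides directly from the closed formula of Lemma~\mref{lem:col}. Writing $[m]=\{1,\dots,m\}$ and regrouping its index set, that lemma may be restated as a single sum over ordered decompositions of $[m]$ into three disjoint blocks:
\begin{equation*}
\col(\sbullet_{x_1}\cdots\sbullet_{x_m})=\sum_{A\sqcup B\sqcup C=[m]}\mu_{x_A}\,\sbullet_{x_B}\ot\sbullet_{x_C},
\end{equation*}
where $A$ records the leaves absorbed into the scalar $\mu_{x_A}$ and $B,C$ record the two tensor factors.

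First I would apply $\col\ot\id$ to this expression. Since $\sbullet_{x_B}=\prod_{i\in B}\sbullet_{x_i}$ is itself a product of leaf vertices, Lemma~\mref{lem:col} applies verbatim to the inner coproduct $\col(\sbullet_{x_B})$, now written as a sum over ordered decompositions of $B$. Substituting yields a sum indexed by five pairwise disjoint blocks $A,A',B',C',C$ with union $[m]$:
\begin{equation*}
(\col\ot\id)\col(F)=\sum_{A\sqcup A'\sqcup B'\sqcup C'\sqcup C=[m]}\mu_{x_A}\mu_{x_{A'}}\,\sbullet_{x_{B'}}\ot\sbullet_{x_{C'}}\ot\sbullet_{x_C}.
\end{equation*}
Symmetrically, applying $\id\ot\col$ and expanding $\col(\sbullet_{x_C})$ gives a sum over five disjoint blocks $A,B,A'',B'',C''$ with union $[m]$ and summand $\mu_{x_A}\mu_{x_{A''}}\,\sbullet_{x_B}\ot\sbullet_{x_{B''}}\ot\sbullet_{x_{C''}}$.

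The observation that finishes the argument is that on each side the two scalar-absorbing blocks (namely $A$ and $A'$ on the left, and $A$ and $A''$ on the right) enter only through the product $\mu_{x_A}\mu_{x_{A'}}=\mu_{x_{A\sqcup A'}}$. Merging them into a single block $P:=A\sqcup A'$ (respectively $A\sqcup A''$), each fixed $P$ arises from exactly $2^{|P|}$ ordered splittings, so both expressions collapse to the \emph{same} symmetric sum over ordered four-block decompositions
\begin{equation*}
\sum_{P\sqcup Q\sqcup R\sqcup S=[m]}2^{|P|}\,\mu_{x_P}\,\sbullet_{x_Q}\ot\sbullet_{x_R}\ot\sbullet_{x_S},
\end{equation*}
under the identifications $(Q,R,S)=(B',C',C)$ on the left and $(Q,R,S)=(B,B'',C'')$ on the right. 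Comparing the two proves Eq.~\meqref{eq:coasse1}.

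I expect the only genuine obstacle to be the bookkeeping: one must verify that the three tensor-factor blocks line up in the correct order across the two computations, and that the doubling factor $2^{|P|}$ appears identically on both sides — a point already visible in the base case $m=1$, where the coefficient of $\etree\ot\etree\ot\etree$ is $2\mu_{x_1}$. If one prefers to sidestep this merging step, an equivalent route is induction on $m=\bre(F)$: peeling off the last leaf as $F=F'\sbullet_{x_m}$, using that $\col$ is an algebra homomorphism (Lemma~\mref{lem:colh}) to write $\col(F)=\col(F')\col(\sbullet_{x_m})$, and then invoking the inductive coassociativity of $\col$ on $F'$ together with the already-verified case $m=1$.
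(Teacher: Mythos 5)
Your proof is correct, but it takes a genuinely different route from the paper's. The paper proves Eq.~\meqref{eq:coasse1} by induction on the breadth $m$: it peels off the last leaf, writes $\col(F)=\col(F')\col(\sbullet_{x_{i+1}})$, and pushes the induction hypothesis through the product --- exactly the fallback route you sketch in your final sentence. You instead compute both iterated coproducts in closed form from Lemma~\mref{lem:col}: each side becomes a sum over ordered decompositions of $[m]$ into five disjoint blocks, two of which are scalar-absorbing, and the key point is that merging the two scalar blocks into a single block $P$ produces the same counting factor $2^{|P|}$ on both sides, so that both sides collapse to $\sum_{P\sqcup Q\sqcup R\sqcup S=[m]}2^{|P|}\mu_{x_P}\,\sbullet_{x_Q}\ot\sbullet_{x_R}\ot\sbullet_{x_S}$. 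The bookkeeping you flag as the only risk does check out: over a fixed $(P,Q,R,S)$ the fiber of the merging map has exactly $2^{|P|}$ elements on either side (once the splitting $P=A\sqcup A'$ is chosen, the remaining blocks are forced and the nesting constraints are automatically satisfied), no reordering of leaves is ever needed since $\sbullet_{x_I}$ always denotes the product in increasing index order, and no division occurs, so the argument is valid over an arbitrary base ring $\bfk$. What your approach buys is a shorter, induction-free argument that moreover exhibits an explicit symmetric formula for the twofold coproduct, one that iterates readily to $n$-fold coassociativity; what the paper's approach buys is uniformity, since the same breadth-then-depth induction template is reused for the general coassociativity (Lemma~\mref{lem:coass2}) and for the counit axioms, whereas your closed-form computation is special to the depth-zero forests treated in this lemma and would not extend verbatim to forests containing vertices decorated by $\Omega$.
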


For the coassociativity in general, we have 
\begin{lemma}\mlabel{lem:coass2}
Let $F\in\hrts$. Then
\begin{equation}
 (\col\ot\id)\col(F)=(\id\ot\col)\col(F).
 \mlabel{eq:coasse2}
\end{equation}
\begin{proof}
By the linearity, we shall use induction on $\dep(F)$ to prove Eq.~\meqref{eq:coasse2}, where $F\in\calf_{{\rm RT}}(X,\Omega)$. If $\dep(F) = 0$, then $F = 1$ or $F = \sbullet_{x_{1}}\cdots\sbullet_{x_{m}}$ for $m \geq1$ and $x_{i} \in X$. If $F = 1$, we have
$$(\col\ot\id)\col(1)=(\col\ot\id)(1\ot1)=1\ot1\ot1=(\id\ot\col)\col(1).$$
If $F = \sbullet_{x_{1}}\cdots\sbullet_{x_{m}}$, then Eq.~\meqref{eq:coasse2} holds by Lemma ~\mref{lem:coass1}. Let $j\geq0$. Assume that Eq.~\meqref{eq:coasse2} holds for $0 \leq \dep(F) \leq j$ and consider the case when $\dep(F) = j + 1$. We next prove Eq.~\meqref{eq:coasse2} by induction on $\bre(F) \geq 0$. Since $\dep(F) = j + 1 \geq 1$, we have $F\neq 1$ and so $\bre(F) \geq 1$. If $\bre(F) = 1$, we can write $F = B^+_{\omega}(\lbar{F})$ for some $\lbar{F} \in \calf_{{\rm RT}}(X,\Omega)$ and $\omega\in\Omega$. Hence
\begin{align*}
&(\col\ot\id)\col(F)\\
&=(\col\ot\id)\col(B^+_{\omega}(\lbar{F}))\\
&=(\col\ot\id)(B^+_{\omega}(\lbar{F})\ot1+\lambda_\omega\lbar{F}\ot1+(\id\ot B^+_{\omega})\col(\lbar{F}))\quad \text{(by Eq.~\meqref{eq:dbp})}\\
&=\col(B^+_{\omega}(\lbar{F}))\ot1+\lambda_\omega\col(\lbar{F})\ot1+(\col\ot B^+_{\omega})\col(\lbar{F})\\
&=B^+_{\omega}(\lbar{F})\ot1\ot1+\lambda_\omega\lbar{F}\ot1\ot1+(\id\ot B^+_{\omega})\col(\lbar{F})\ot1+\lambda_\omega\col(\lbar{F})\ot1\\
&\quad+(\id\ot\id\ot B^+_{\omega})(\col\ot\id)\col(\lbar{F})\quad \text{(by Eq.~\meqref{eq:dbp})}\\
&=B^+_{\omega}(\lbar{F})\ot1\ot1+\lambda_\omega\lbar{F}\ot1\ot1+(\id\ot B^+_{\omega})\col(\lbar{F})\ot1+\lambda_\omega\col(\lbar{F})\ot1\\
&\quad+(\id\ot\id\ot B^+_{\omega})(\id\ot\col)\col(\lbar{F})\quad\text{(by the induction hypothesis on depth)}\\
&=B^+_{\omega}(\lbar{F})\ot\col(1)+\lambda_\omega\lbar{F}\ot\col(1)+(\id\ot\col B^+_{\omega})\col(\lbar{F})\quad \text{(by Eq.~\meqref{eq:cdbp}}\\
&=(\id\ot\col)(B^+_{\omega}(\lbar{F})\ot1+\lambda_\omega\lbar{F}\ot1+(\id\ot B^+_{\omega})\col(\lbar{F}))\\
&=(\id\ot\col)\col(B^+_{\omega}(\lbar{F}))\\
&=(\id\ot\col)\col(F).
\end{align*}
Let $i\geq1$. Assume that Eq.~\meqref{eq:coasse2} holds for $\dep(F) = j +1$ and $1 \leq \bre(F) \leq i$. Next consider the case when $\dep(F) = j+1$ and $\bre(F) = i+1 \geq 2$. Then $F =F'F''$ for some $F' ,F'' \in \calf_{{\rm RT}}(X,\Omega)$ with $1 \leq \bre(F'),\bre(F'') \leq i$. Write
 $$\col(F')=\sum_{(F')}F'_{(1)}\ot F'_{(2)}\quad\text{and}\quad\col(F'')=\sum_{(F'')}F''_{(1)}\ot F''_{(2)}.$$
By the induction hypothesis on breadth, we have
\begin{align*}
(\col\ot\id)\col(F')&=(\id\ot\col)\col(F'),\\
(\col\ot\id)\col(F'')&=(\id\ot\col)\col(F'').
\end{align*}
That is
\begin{equation}
\begin{aligned}\mlabel{eq:coaee}
\sum_{(F')}\col(F'_{(1)})\ot F'_{(2)}&=\sum_{(F')}F'_{(1)}\ot \col(F'_{(2)}),\\
\sum_{(F'')}\col(F''_{(1)})\ot F''_{(2)}&=\sum_{(F'')}F''_{(1)}\ot \col(F''_{(2)}).
\end{aligned}
\end{equation}
Thus
\begin{align*}
(\col\ot\id)\col(F)&=(\col\ot\id)(\col(F')\col(F''))\\
&=(\col\ot\id)\bigg(\sum_{F'}\sum_{F''}F'_{(1)}F''_{(1)}\ot F'_{(2)}F''_{(2)}\bigg)\\
&=\sum_{F'}\sum_{F''}\col(F'_{(1)}F''_{(1)})\ot F'_{(2)}F''_{(2)}\\
&=\sum_{F'}\sum_{F''}\col(F'_{(1)})\col(F''_{(1)})\ot F'_{(2)}F''_{(2)}\\
&=(\sum_{(F')}\col(F'_{(1)})\ot F'_{(2)})(\sum_{(F'')}\col(F''_{(1)})\ot F''_{(2)}).
\end{align*}
 Similarly, we have
 $$(\id\ot\col)\col(F)=\bigg(\sum_{(F')}F'_{(1)}\ot \col(F'_{(2)})\bigg)\bigg(\sum_{(F'')}F''_{(1)}\ot \col(F''_{(2)})\bigg).$$
Then Eq.~\meqref{eq:coasse2} follows from Eq.~\meqref{eq:coaee}.
 \end{proof}
\end{lemma}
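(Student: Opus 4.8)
The plan is to prove Eq.~\meqref{eq:coasse2} for basis elements $F\in\calf_{{\rm RT}}(X,\Omega)$ and then extend by linearity. The argument will be a nested induction: an outer induction on the depth $\dep(F)$, and, inside the inductive step, an inner induction on the breadth $\bre(F)$. For the base case $\dep(F)=0$, either $F=\etree$, where both sides equal $\etree\ot\etree\ot\etree$ since $\col(\etree)=\etree\ot\etree$, or $F=\sbullet_{x_1}\cdots\sbullet_{x_m}$, where coassociativity is precisely the content of Lemma~\mref{lem:coass1}. This disposes of depth zero entirely and lets me assume, when treating $\dep(F)=j+1$, that Eq.~\meqref{eq:coasse2} holds for every forest of depth at most $j$ (the depth hypothesis).

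Second, within depth $j+1$ the breadth-one case is the heart of the argument. Here $F=\pl(\lbar{F})$ with $\dep(\lbar{F})=j$, and I would apply the weighted cocycle condition~\meqref{eq:dbp} and expand both sides. Applying $\col\ot\id$ to $\col(F)$ produces $\col\pl(\lbar{F})\ot\etree$, a term $(\col\ot\id)((\id\ot\pl)\col(\lbar{F}))$, and a weight term $\lambda_\omega\col(\lbar{F})\ot\etree$; reapplying~\meqref{eq:dbp} to the first summand and invoking the depth hypothesis $(\col\ot\id)\col(\lbar{F})=(\id\ot\col)\col(\lbar{F})$ on the middle summand rewrites everything through $(\id\ot\col)\col(\lbar{F})$. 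Applying $\id\ot\col$ instead forces a second use of~\meqref{eq:dbp}, this time on $\col\pl(\lbar{F}_{(2)})$ inside $(\id\ot\col)((\id\ot\pl)\col(\lbar{F}))$, which again spawns a weight contribution. The crux is then to check that the five resulting summands appear identically on both sides: the top term $\pl(\lbar{F})\ot\etree\ot\etree$; the two grafting terms $((\id\ot\pl)\col(\lbar{F}))\ot\etree$ and $(\id\ot\id\ot\pl)(\id\ot\col)\col(\lbar{F})$; and the two weight terms $\lambda_\omega\lbar{F}\ot\etree\ot\etree$ and $\lambda_\omega\col(\lbar{F})\ot\etree$. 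The genuinely new feature relative to the classical $\lambda=0$ Connes--Kreimer case is the bookkeeping of these two $\lambda_\omega$-terms, one arising from the explicit weight summand of $\col(F)$ and one from reapplying the cocycle condition; verifying that they match on both sides rather than leaving a residue is the single place where the calculation must be carried out carefully, and is where I expect the main obstacle to lie.

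Third, the breadth $\geq 2$ case is routine once multiplicativity is available. Writing $F=F'F''$ with $\bre(F'),\bre(F'')\leq i$ and using that $\col$ is an algebra homomorphism (Lemma~\mref{lem:colh}), I would expand $\col(F)=\col(F')\col(F'')$ in Sweedler notation and apply $\col\ot\id$; since $\col$ is multiplicative, $(\col\ot\id)\col(F)$ factors as the componentwise product of $(\col\ot\id)\col(F')$ and $(\col\ot\id)\col(F'')$ inside $\hrts^{\ot 3}$. The breadth hypothesis replaces each factor by its $(\id\ot\col)\col$ counterpart, and reversing the same multiplicative manipulation recovers $(\id\ot\col)\col(F)$. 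Thus the only substantive work is the weight-tracking of the breadth-one step; the depth-zero and higher-breadth cases are handled respectively by Lemma~\mref{lem:coass1} and Lemma~\mref{lem:colh}.
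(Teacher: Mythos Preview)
Your proposal is correct and follows essentially the same route as the paper: a nested induction on $\dep(F)$ and then $\bre(F)$, with the depth-zero case handled by Lemma~\mref{lem:coass1}, the breadth-one case by expanding via the weighted cocycle condition~\meqref{eq:dbp} and invoking the depth hypothesis on $\lbar{F}$, and the higher-breadth case by multiplicativity (Lemma~\mref{lem:colh}). Your enumeration of the five summands in the breadth-one step, including the two $\lambda_\omega$-terms, matches the paper's computation exactly.
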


We verify the counit property in the next two lemmas. 
\begin{lemma}\mlabel{lem:conui1}
Let $F=\sbullet x_{1}\cdots\sbullet x_{m}\in\hrts,m\geq1$. Then
\begin{equation}\mlabel{eq:conui1}
(\epl\ot\id)\col(F)=\beta_{l}(F)\quad\text{and}\quad (\id\ot\epl)\col(F)=\beta_{r}(F),
\end{equation}
where $\beta_{l}:\hrts\rightarrow\bfk\ot\hrts$ is given by $F\rightarrow1_{k}\ot F$ and $\beta_{r}:\hrts\rightarrow\hrts\ot \bfk$ is given by $F\rightarrow F\ot 1_{k}$.
\end{lemma}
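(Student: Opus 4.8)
The plan is to compute both sides directly from the explicit expansion of $\col$ on a product of $X$-decorated leaves, which is already available in Lemma~\mref{lem:col}, together with the closed formula for the counit in Eq.~\meqref{e:counitexp}. For a forest $\sbullet_{x_K}$ built only from leaves decorated by $X$, the counit specializes to $\epl(\sbullet_{x_K})=(-1)^{|K|}\mu_{x_K}$, where $|K|$ denotes the cardinality of $K$. This reduces the whole lemma to an elementary sign-cancellation identity, so no further induction on depth or breadth is needed.

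First I would apply $\epl\ot\id$ to the formula $\col(F)=\sum_{I\sqcup J=[m]}\mu_{x_I}\big(\sum_{K\sqcup L=J}\sbullet_{x_K}\ot\sbullet_{x_L}\big)$ from Lemma~\mref{lem:col}, rewriting the double sum as a single sum over ordered triples with $I\sqcup K\sqcup L=[m]$. Using $\epl(\sbullet_{x_K})=(-1)^{|K|}\mu_{x_K}$, the coefficient of $1_\bfk\ot\sbullet_{x_L}$ becomes $\sum_{I\sqcup K=[m]\setminus L}\mu_{x_I}(-1)^{|K|}\mu_{x_K}$.

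The key observation is that, setting $S:=[m]\setminus L$, this sum factorizes over the elements of $S$: each $s\in S$ is assigned either to $I$ (contributing $\mu_{x_s}$) or to $K$ (contributing $-\mu_{x_s}$), so the sum equals $\prod_{s\in S}(\mu_{x_s}-\mu_{x_s})=\prod_{s\in S}0$. Hence it vanishes unless $S=\emptyset$, i.e.\ $L=[m]$, in which case the empty product is $1$. Thus only the term $1_\bfk\ot\sbullet_{x_{[m]}}=1_\bfk\ot F$ survives, giving $(\epl\ot\id)\col(F)=\beta_l(F)$.

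For the second identity I would run the identical argument with the roles of $K$ and $L$ interchanged: applying $\id\ot\epl$ leaves the coefficient of $\sbullet_{x_K}\ot1_\bfk$ as $\sum_{I\sqcup L=[m]\setminus K}\mu_{x_I}(-1)^{|L|}\mu_{x_L}$, which again collapses to $\beta_r(F)$. The only place that requires genuine care—and the single step where an error could creep in—is the bookkeeping of the three disjoint index sets, confirming that $I,K,L$ range correctly in the factorization so that the telescoping $\prod_{s\in S}(\mu_{x_s}-\mu_{x_s})$ is legitimate; everything else is a purely formal cancellation. (Alternatively, one could induct on $m$ via $F=F'\sbullet_{x_m}$ using the algebra-homomorphism property of $\col$ and $\epl$ from Lemmas~\mref{lem:colh} and~\mref{lem:eplh}, but the direct computation above is shorter.)
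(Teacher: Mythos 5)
Your proof is correct, but it takes a genuinely different route from the paper. The paper proves the lemma by induction on the breadth $m$: it writes $F=F'\sbullet_{x_{i+1}}$, applies Lemma~\mref{lem:col} to $\col(F')$, expands the product $\col(F')\col(\sbullet_{x_{i+1}})$, and invokes the induction hypothesis, with the cancellation $-\mu_{x_{i+1}}1_\bfk\ot F'+\mu_{x_{i+1}}1_\bfk\ot F'=0$ appearing once per inductive step. You instead avoid induction altogether: you expand $\col(F)$ from Lemma~\mref{lem:col} into a single sum over ordered triples $I\sqcup K\sqcup L=[m]$, use $\epl(\sbullet_{x_K})=(-1)^{|K|}\mu_{x_K}$ (immediate from Eq.~\meqref{eq:dele2}, or from Eq.~\meqref{e:counitexp}), and observe that the coefficient of $1_\bfk\ot\sbullet_{x_L}$ factorizes as $\prod_{s\in S}(\mu_{x_s}-\mu_{x_s})$ over $S=[m]\setminus L$, which kills every term except $L=[m]$. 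This factorization step is valid, and there is no circularity: Lemma~\mref{lem:col} rests on Theorem~\mref{thm:com}, which is proved independently of the counit lemmas. What your approach buys is brevity and transparency --- it exposes the cancellation as a global binomial-type identity rather than a term-by-term telescoping --- while the paper's inductive proof has the virtue of running in parallel with the other inductive arguments of the section (notably Lemma~\mref{lem:coass1}) and of using only the recursive definitions of $\col$ and $\epl$ rather than their closed-form expansions.
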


\begin{proof}
We will prove Eq.~\meqref{eq:conui1} by induction on $m=\bre(F)\geq1$. For the initial step of $m=1$, we have $F=\sbullet_{x_{1}}$ for some $x_{1}\in X$. Then
\begin{align*}
(\epl\ot\id)\col(F)
&=(\epl\ot\id)(\sbullet_{x_{1}}\ot1+1\ot\sbullet_{x_{1}}+\mu_{x_1}1\ot1)\\
&=\epl(\sbullet_{x_{1}})\ot1+1_{k}\ot\sbullet_{x_{1}}+\mu_{x_1}1_{k}\ot1\\
&=-\mu_{x_1}1_{k}\ot1+1_{k}\ot\sbullet_{x_{1}}+\mu_{x_1}1_{k}\ot1\\
&=1_{k}\ot\sbullet_{x_{1}}\\
&=\beta_{l}(F),
\end{align*}
and similarly,
$$(\id\ot\epl)\col(F)=\beta_r(F).$$

Let $i\geq1$. Assume that Eq.~\meqref{eq:conui1} holds for $1\leq m\leq i$ and consider the case when $m=i+1\geq2$. We can write $F=F'\sbullet_{x_{i+1}}$, where $F'=\sbullet_{x_{1}}\cdots\sbullet_{x_{i}}$ and $\bre(F')=i$. By Lemma ~\mref{lem:col}, we have
$$\col(F')=\sum_{I\sqcup J=[i]}\mu_{x_I}\bigg(\sum_{K\sqcup L=J}\sbullet_{x_{K}}\otimes\sbullet_{x_{L}}\bigg).$$
By the induction hypothesis, we obtain $(\epl\ot\id)\col(F')=\beta_{l}(F'),\  (\id\ot\epl)\col(F')=\beta_{r}(F')$, that is
\begin{align*}
 \sum_{I\sqcup J=[i]}\mu_{x_I}\bigg(\sum_{K\sqcup L=J}\epl(\sbullet_{x_{K}})\otimes\sbullet_{x_{L}}\bigg)=1_{k}\ot F' ,
\end{align*}
\begin{align*}
\sum_{I\sqcup J=[i]}\mu_{x_I}\bigg(\sum_{K\sqcup L=J}\sbullet_{x_{K}}\otimes\epl(\sbullet_{x_{L}})\bigg)=F'\ot1_{k} .
\end{align*}
Thus
\begin{align*}
&(\epl\ot\id)\col(F)\\
&=(\epl\ot\id)(\col(F')\col(\sbullet_{x_{k}}))\\
&=(\epl\ot\id)\left(\bigg(\sum_{I\sqcup J=[i]}\mu_{x_I}\big(\sum_{K\sqcup L=J}\sbullet_{x_{K}}\otimes\sbullet_{x_{L}}\big)\bigg)(\sbullet_{x_{i+1}}\ot \etree+\etree \ot \sbullet_{x_{i+1}}+\mu_{x_{i+1}}\etree \ot \etree)\right)\\
&=(\epl\ot\id)\left(\sum_{I\sqcup J=[i]}\mu_{x_I}\bigg(\sum_{K\sqcup L=J}\sbullet_{x_K}\sbullet_{x_{i+1}}\ot\sbullet_{x_L}+\sbullet_{x_K}\ot\sbullet_{x_L}\sbullet_{x_{i+1}}+\mu_{x_{i+1}}\sbullet_{x_{K}}\otimes\sbullet_{x_{L}}\bigg)\right)\\
&=\sum_{I\sqcup J=[i]}\mu_{x_I}\bigg(\sum_{K\sqcup L=J}\epl(\sbullet_{x_K})\epl(\sbullet_{x_{i+1}})\ot\sbullet_{x_L}+\epl(\sbullet_{x_K})\ot\sbullet_{x_L}\sbullet_{x_{i+1}}+\mu_{x_{i+1}}\epl(\sbullet_{x_{K}})\otimes\sbullet_{x_{L}}\bigg)\\
&=\left(\sum_{I\sqcup J=[i]}\mu_{x_I}\bigg(\sum_{K\sqcup L=J}\epl(\sbullet_{x_{K}})\otimes\sbullet_{x_{L}}\bigg)\right)(\epl(\sbullet_{x_{i+1}})\ot\etree+1_k\ot\sbullet_{x_{i+1}}+\mu_{x_{i+1}}1_k\ot1)\\
&=(1_k\ot F')(\epl(\sbullet_{x_{i+1}})\ot\etree+1_k\ot\sbullet_{x_{i+1}}+\mu_{x_{i+1}}1_k\ot1)\quad\text{(by induction hypothesis)}\\
&=\epl(\sbullet_{x_{i+1}})\ot F'+1_k\ot F'\sbullet_{x_{i+1}}+\mu_{x_{i+1}}1_k\ot F'\\
&=-\mu_{x_{i+1}}1_k\ot F'+1_k\ot F+\mu_{x_{i+1}}1_k\ot F'\\
&=1_{k}\ot F\\
&=\beta_{l}(F).
\end{align*}
A similarly computation gives
$$(\id\ot\epl)\col(F) = \beta_r(F).$$
This completes the induction on the breadth.
\end{proof}

\begin{lemma}\mlabel{lem:coal}
Let $F\in\hrts$. Then
\begin{equation}\mlabel{eq:coal}
(\epl\ot\id)\col(F)=\beta_{l}(F)\quad\text{and}\quad (\id\ot\epl)\col(F)=\beta_{r}(F).
\end{equation}
\end{lemma}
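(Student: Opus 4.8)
The plan is to follow the same double-induction scheme used for coassociativity in Lemma~\mref{lem:coass2}: an outer induction on $\dep(F)$, and within each depth an inner induction on $\bre(F)$, with the depth-zero layer handled separately. Indeed, for $\dep(F)=0$ either $F=\etree$, where $\col(\etree)=\etree\ot\etree$ and $\epl(\etree)=1_\bfk$ give both identities of Eq.~\meqref{eq:coal} immediately, or $F=\sbullet_{x_1}\cdots\sbullet_{x_m}$, which is precisely Lemma~\mref{lem:conui1}. So I would assume Eq.~\meqref{eq:coal} for all $F$ with $\dep(F)\le j$ and treat the case $\dep(F)=j+1$.

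For the inner base case $\bre(F)=1$, write $F=\pl(\lbar{F})$, so that $\dep(\lbar{F})=j$ and the depth hypothesis applies to $\lbar{F}$. Applying $\epl\ot\id$ to the cocycle formula Eq.~\meqref{eq:dbp} and using the counit recursion Eq.~\meqref{eq:dbp2} in the form $\epl(\pl(\lbar{F}))=-\lambda_\omega\epl(\lbar{F})$, the outer and weight terms become $-\lambda_\omega\epl(\lbar{F})\ot\etree$ and $\lambda_\omega\epl(\lbar{F})\ot\etree$, while the middle term rearranges as $(\id\ot\pl)(\epl\ot\id)\col(\lbar{F})$, which by the depth hypothesis equals $(\id\ot\pl)(1_\bfk\ot\lbar{F})=1_\bfk\ot F$. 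The two weight terms cancel, leaving $(\epl\ot\id)\col(F)=1_\bfk\ot F=\beta_l(F)$. The right-hand identity is symmetric: here $\epl(\etree)=1_\bfk$ collapses the outer factor, the middle term contributes $(\id\ot(\epl\circ\pl))\col(\lbar{F})=-\lambda_\omega\beta_r(\lbar{F})$ by Eq.~\meqref{eq:dbp2} and the depth hypothesis, and this cancels the term $\lambda_\omega\lbar{F}\ot1_\bfk$, leaving $\beta_r(F)$.

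For the inner step $\bre(F)=i+1\ge 2$, factor $F=F'F''$ with $\bre(F'),\bre(F'')\le i$. Since $\col$ and $\epl$ are algebra homomorphisms (Lemmas~\mref{lem:colh} and~\mref{lem:eplh}), the map $\epl\ot\id$ is multiplicative on $\hrts\ot\hrts$, and $\beta_l$ is an algebra homomorphism under the identification $\bfk\ot\hrts\cong\hrts$ because $(1_\bfk\ot F')(1_\bfk\ot F'')=1_\bfk\ot F'F''$. Hence $(\epl\ot\id)\col(F)=\big((\epl\ot\id)\col(F')\big)\big((\epl\ot\id)\col(F'')\big)=\beta_l(F')\beta_l(F'')=\beta_l(F)$ by the breadth hypothesis, and the computation for $\beta_r$ is identical. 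This completes both inductions.

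The only delicate point is the grafting case, where one must see that the extra summand $\lambda_\omega\,\id\ot\etree$ appearing in the weighted cocycle condition Eq.~\meqref{eq:cdbp} is exactly compensated by the weight $-\lambda_\omega$ in $\epl\circ\pl$ coming from Eq.~\meqref{eq:dbp2}. This is the sole place where the specific definition of the counit on grafted trees is used, and it is precisely the sign bookkeeping that makes the weighted $1$-cocycle condition compatible with counitality; everything else is a routine transport of Lemma~\mref{lem:conui1} along the grafting operators and the multiplicative structure.
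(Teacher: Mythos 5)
Your proposal is correct and follows essentially the same route as the paper's proof: an outer induction on $\dep(F)$ with base case given by Lemma~\mref{lem:conui1}, an inner induction on $\bre(F)$, the grafting case handled by applying $\epl\ot\id$ (resp.\ $\id\ot\epl$) to Eq.~\meqref{eq:dbp} and cancelling the weight terms via Eq.~\meqref{eq:dbp2}, and the breadth step via the multiplicativity of $\col$ and $\epl$ (Lemmas~\mref{lem:colh} and~\mref{lem:eplh}). The only difference is cosmetic: you write out the $\beta_r$ computation explicitly (correctly noting that there the cancellation pairs the weight term with the middle term rather than with the outer term), where the paper simply asserts it is analogous.
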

\begin{proof}
By the linearity, we use induction on $\dep(F) \geq 0$ to prove Eq.~\meqref{eq:coal} for $F\in \calf_{{\rm RT}}(X,\Omega)$. 
We just prove the first identity since the prove of the second one is the same. 

For the initial step of $\dep(F) = 0$, we have $F = 1$ or $F =\sbullet_{x_{1}}\cdots\sbullet_{x_{m}},\ m\geq1,x_{i}\in X$. If $F = 1$, then
\begin{align*}
(\epl\ot\id)\col(1)=(\epl\ot\id)(1\ot1)=\epl(1)\ot1=1_{k}\ot1=\beta_{l}(1).
\end{align*}
If $F =\sbullet_{x_{1}}\cdots\sbullet_{x_{m}}$, then Eq.~\meqref{eq:coal} holds by Lemma ~\mref{lem:conui1}. Let $j\geq0$. Assume that Eq.~\meqref{eq:coal} holds when $0 \leq \dep(F) \leq j$ and consider the case when $\dep(F) = j+1\geq 1$. We next reduce to prove Eq.~\meqref{eq:coal} by induction on breadth. Since $\dep(F) \geq 1$, we have $F\neq 1$ and so $\bre(F) \geq 1$. If $\bre(F) = 1$, we can write $F = B^+_{\omega}(\lbar{F})$ for some $F \in F_{{\rm RT}}(X,\Omega)$ and $\omega\in\Omega$. Then
\begin{align*}
&(\epl\ot\id)\col(F)\\
&=(\epl\ot\id)\col(B^+_{\omega}(\lbar{F}))\\
&=(\epl\ot\id)(B^+_{\omega}(\lbar{F})\ot1+\lambda_\omega\lbar{F}\ot1+(\id\ot B^+_{\omega})\col(\lbar{F}))\quad\text{(by Eq.~\meqref{eq:dbp})}\\
&=\epl(B^+_{\omega}(\lbar{F}))\ot1+\lambda_\omega\epl(\lbar{F})\ot1+(\epl\ot B^+_{\omega})\col(\lbar{F})\\
&=-\lambda_\omega\epl(\lbar{F})\ot1+\lambda_\omega\epl(\lbar{F})\ot1+(\id\ot B^+_{\omega})(\epl\ot\id)\col(\lbar{F})\quad\text{(by Eq.~\meqref{eq:dbp2})}\\
&=(\id\ot B^+_{\omega})\beta_{l}(\lbar{F})\quad\text{(by the induction hypothesis on depth)}\\
&=(\id\ot B^+_{\omega})(1_{k}\ot\lbar{F})\\
&=1_{k}\ot B^+_{\omega}(\lbar{F})\\
&=1_{k}\ot F\\
&=\beta_{l}(F).
\end{align*}

Let $i\geq1$. Assume that the first identity in Eq.~\meqref{eq:coal} holds when $1 \leq \bre(F) \leq i$. Consider the case when $\bre(F) = i+1 \geq 2$. So we can write $F = F'F''$ , where $1 \leq\bre(F'),\bre(F'') \leq i$. Write
\begin{align*}
\col(F')=\sum_{(F')}F'_{(1)}\ot F'_{(2)}\quad\text{and}\quad \col(F'')=\sum_{(F'')}F''_{(1)}\ot F''_{(2)}.
\end{align*}
By the induction hypothesis, we have
\begin{align*}
(\epl\ot\id)\col(F')=\beta_{l}(F'),&\quad
(\epl\ot\id)\col(F'')=\beta_{l}(F'').
\end{align*}
That is
\begin{align}
\sum_{(F')}\epl(F'_{(1)})\ot F'_{(2)}=1_{k}\ot F',&\quad\sum_{(F'')}\epl(F''_{(1)})\ot F''_{(2)}=1_{k}\ot F''. \mlabel{eq:eassee1}
\end{align}
Thus
\begin{align*}
(\epl\ot\id)\col(F)&=(\epl\ot\id)\col(F'F'')\\
&=(\epl\ot\id)(\col(F')\col(F''))\quad\text{(by Lemma ~\mref{lem:colh})}\\
&=(\epl\ot\id)\bigg(\sum_{(F')(F'')}F'_{(1)}F''_{(1)}\ot F'_{(2)}F''_{(2)}\bigg)\\
&=\sum_{(F')(F'')}\epl(F'_{(1)})\epl(F''_{(1)})\ot F'_{(2)}F''_{(2)}\quad\text{(by Lemma ~\mref{lem:eplh})}\\
&=\bigg(\sum_{(F')}\epl(F'_{(1)})\ot F'_{(2)}\bigg)\bigg(\sum_{(F'')}\epl(F''_{(1)})\ot F''_{(2)}\bigg)\\
&=(1_{k}\ot F')(1_{K}\ot F'')\quad\text{(by Eq.~\meqref{eq:eassee1})}\\
&=1_{k}\ot F'F''\\
&=\beta_{l}(F).
\end{align*}
This completes the inductions on the depth and breadth.
 \end{proof}

With all the preparation, we arrive at the first main result of this section. 

\begin{theorem}\mlabel{thm:bia}
The quintuple $(\hrts,\mul,1,\col,\epl)$ is a unitary bialgebra.
\begin{proof}
We have shown that the triple $(\hrts,\mul,1)$ is a unitary algebra. By Lemma ~\mref{lem:coass2} and Lemma ~\mref{lem:coal}, we know that the triple $(\hrts,\col,\epl)$ is a coalgebra. Then, applying Lemmas ~\mref{lem:colh} and ~\mref{lem:eplh}, we conclude that $(\hrts,\mul,1,
\col,\epl)$ is a unitary bialgebra.
\end{proof}
\end{theorem}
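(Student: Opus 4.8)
The plan is to observe that a unitary bialgebra structure decomposes into three layers --- a unitary algebra, a counital coalgebra, and the compatibility that the coproduct and counit are algebra homomorphisms --- and that each layer has already been prepared by the preceding lemmas, so that the proof reduces to assembling them in the correct order. First I would record the algebra layer: the triple $(\hrts,\mul,1)$ is a unitary associative algebra, since by Definition~\mref{de:oua} the module $\hrts=\bfk M(\calt_{{\rm RT}}(X,\Omega))$ is the monoid algebra under the concatenation product $\mul$, with unit the empty tree $\etree$.

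Second I would assemble the coalgebra layer. Coassociativity, $(\col\ot\id)\col=(\id\ot\col)\col$, is exactly Lemma~\mref{lem:coass2}, and the counit identities $(\epl\ot\id)\col=\beta_{l}$ and $(\id\ot\epl)\col=\beta_{r}$ are exactly Lemma~\mref{lem:coal}; together with the normalization $\epl(\etree)=1_{\bfk}$ from Eq.~\eqref{eq:dele2}, these say precisely that $(\hrts,\col,\epl)$ is a coassociative counital coalgebra. Third I would check compatibility: Lemma~\mref{lem:colh} gives that $\col$ is an algebra homomorphism and Lemma~\mref{lem:eplh} gives that $\epl$ is an algebra homomorphism, where $\hrts\ot\hrts$ carries the tensor-product algebra structure; combined with the unitality normalizations $\col(\etree)=\etree\ot\etree$ from Eq.~\eqref{eq:dele} and $\epl(\etree)=1_{\bfk}$, these are exactly the conditions that the comultiplication and counit respect the algebra structure. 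Once all three layers are in place, the defining axioms of a unitary bialgebra are satisfied and the theorem follows.

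I would not expect a genuine obstacle in this final step, as the statement is an assembly and all of the analytic work lies upstream. The real difficulty was hidden in Lemma~\mref{lem:coass2}: there one must verify that coassociativity survives the weighted correction term $\lambda_\omega\,\id\ot\etree$ appearing in the cocycle condition Eq.~\eqref{eq:cdbp}, which forced a nested double induction on depth and on breadth, with the only delicate point being that the extra weight contributions to $(\col\ot\id)\col(F)$ and to $(\id\ot\col)\col(F)$ match term by term. Granting that lemma together with Lemmas~\mref{lem:colh},~\mref{lem:eplh}, and~\mref{lem:coal}, the present theorem is immediate.
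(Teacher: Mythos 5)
Your proposal is correct and follows essentially the same route as the paper's proof: the algebra structure from the concatenation product, the coalgebra structure from Lemmas~\mref{lem:coass2} and~\mref{lem:coal}, and the compatibility from Lemmas~\mref{lem:colh} and~\mref{lem:eplh}. The additional remarks on unit normalizations and on where the real work lies are accurate but do not change the argument.
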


\subsection{A Hopf algebra structure on $(X, \Omega)$-decorated rooted trees}
\mlabel{ss:hopf}
We first recall some background on connected filtered bialgebras and Hopf algebras, following \mcite{GG}, from which we refer the reader for details and other references.

A bialgebra $(H,m,u,\Delta,\varepsilon)$ is called {\bf graded} if there are {\bfk}-submodules $H^{(n)}, n\geq0$, of $H$ such that
\begin{enumerate}
    \item $H=\bigoplus\limits^{\infty}_{n\geq0}H^{(n)},$
    \item $H^{(p)}H^{(q)}\subseteq H^{(p+q)},$
    \item $\Delta(H^{(n)})\subseteq\bigoplus\limits^{}_{p+q=n}H^{(p)}\otimes H^{(q)}, \quad  n, p, q\geq0.$
\end{enumerate}
Elements of $H^{(n)}$ are said to have degree $n$. The bialgebra $H$ is called {\bf connected graded} if in addition $H^{(0)}=\mathrm{im}~ u~(={\bfk})$ and   $\mathrm{ker}~ \varepsilon=\bigoplus_{n\geq 1}H^{(n)}$.
It is well known that a connected graded bialgebra is a Hopf algebra.

\nc{\hgrad}{H_{{\rm RT}}^{(n)}}
\nc{\hfilt}{H_{{\rm RT},(n)}}

The degree $|F|$ of $F\in \calf_{{\rm RT}}(X,\Omega)$ is the number of vertices of $F$. For $n\geq 0$, let $\calf^{(n)}$ denote the set of $F\in \calf_{{\rm RT}}(X,\Omega)$ with degree $n$. Let $\hrts^{(n)}:=\hgrad:=\bfk \calf^{(n)}$ to be the graded space. Since the grading defined above does not satisfy condition $(c)$ for connected graded bialgebras under $\col$, H is not a connected graded bialgebra. Instead, we consider the associated filtration 
\begin{equation}
\hfilt:=\bigoplus_{k\leq n}H_{{\rm RT}}^{(k)},
\mlabel{eq:cofil}
\end{equation} 
and prove that the bialgebra $\hrts$ is a Hopf algebra from the perspective of connected cofiltered bialgebras.

\begin{defn}\cite[Definition 2.2 and 2.8]{GG} \mlabel{def:fil}
\begin{enumerate}
\item  A coalgebra $(C, \Delta, \vep)$ is called {\bf coaugmented} if there is a linear map $u: \bfk \rightarrow C$, called the {\bf coaugmentation}, such that $\vep u=\id_\bfk$.
\item
A coaugmented coalgebra $(C,u,\Delta,\vep)$ is called {\bf cofiltered} if there is an exhaustive increasing filtration $\{C_{(n)}\}_{n\geq 0}$ of $C$ such that
\begin{equation}
\im\,u\subseteq C_{(n)}, \quad \Delta(C_{(n)})\subseteq\sum\limits^{}_{p+q=n}C_{(p)}\otimes C_{(q)}, \quad n\geq0,  p,q\geq0.
\mlabel{eq:cofill}
\end{equation}
Elements in $C_{(n)}\setminus C_{(n-1)}$ are said to have degree $n$. 
\item $C$ is called {\bf connected (filtered)} if in addition $C_{(0)}=\mathrm{im}\, u~(={\bfk})$.
\mlabel{def:b}
\end{enumerate}
\end{defn}

By the coaugmented condition, we have $C=\im\, u \oplus \ker \vep$. Then from $\im\,u \subseteq C_{(n)}$ and modularity, we have $C_{(n)}=\im\, u\oplus (C_{(n)} \cap \ker \vep)$, as stated in~\mcite{GG}.

\begin{lemma}\cite[Theorem 3.4]{GG}\mlabel{lem:filterhopf}
Let $H=(H,m,u,\Delta,\vep)$ be a bialgebra such that $(H, \Delta,\vep)$ is a connected cofiltered coaugmented coalgebra. Then $H$ is a Hopf algebra.
\end{lemma}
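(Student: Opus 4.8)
The plan is to construct the antipode $S$ as the two-sided convolution inverse of $\id_H$ in the algebra $(\End_\bfk H,\star,u\vep)$, where $f\star g:=m\circ(f\ot g)\circ\Delta$; recall that a bialgebra is a Hopf algebra exactly when $\id_H$ admits such an inverse, so it suffices to produce one. First I would set $e:=u\vep$ (the convolution unit) and $\phi:=\id_H-e$, and observe that $\phi$ is the identity on $\ker\vep$ while it annihilates $\im u$: indeed on $\im u=H_{(0)}=\bfk$ the counit condition $\vep u=\id_\bfk$ forces $\id_H=u\vep=e$, whereas for $x\in\ker\vep$ one has $e(x)=u\vep(x)=0$, hence $\phi(x)=x$.

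Next I would extract the decisive consequence of the cofiltration axioms of Definition~\mref{def:fil}. Writing $\tilde\Delta:=\Delta-\id\ot e-e\ot\id$ for the reduced coproduct, I would show
\[
\tilde\Delta(H_{(n)})\subseteq\sum_{p+q=n,\ p,q\geq 1}H_{(p)}\ot H_{(q)}.
\]
This follows from $\Delta(H_{(n)})\subseteq\sum_{p+q=n}H_{(p)}\ot H_{(q)}$ together with the counit axioms $(\vep\ot\id)\Delta=\id=(\id\ot\vep)\Delta$ and connectedness $H_{(0)}=\bfk$: the $p=0$ component lies in $\bfk\ot H_{(n)}$ and is pinned down to be $1\ot x$, and symmetrically the $q=0$ component is $x\ot 1$, so subtracting them leaves only factors of degree $\geq 1$. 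Iterating $j$ times, $\tilde\Delta^{(j)}(H_{(n)})$ lands in a sum of $(j+1)$-fold tensors whose factors all have degree $\geq 1$ and sum to $n$, whence $\tilde\Delta^{(j)}(H_{(n)})=0$ once $j\geq n$. Since $\phi$ projects each tensor slot onto $\ker\vep$, the power $\phi^{\star k}=m^{(k-1)}\circ\phi^{\ot k}\circ\Delta^{(k-1)}$ computes through $\tilde\Delta^{(k-1)}$ on $\ker\vep$, giving the local nilpotence $\phi^{\star(n+1)}\big|_{H_{(n)}}=0$ (using the splitting $H_{(n)}=\im u\oplus(H_{(n)}\cap\ker\vep)$ noted after Definition~\mref{def:fil}).

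With local nilpotence in hand I would define
\[
S:=\sum_{k\geq 0}(-1)^k\phi^{\star k}=e-\phi+\phi^{\star 2}-\cdots,
\]
which on any $x\in H_{(n)}$ reduces to the finite sum $\sum_{k\leq n}(-1)^k\phi^{\star k}(x)$ and is therefore a well-defined $\bfk$-linear endomorphism of $H$. A telescoping computation then gives $S\star\id_H=S\star(e+\phi)=S\star e+S\star\phi=S-(S-e)=e$, since $S\star\phi=\sum_{k\geq 0}(-1)^k\phi^{\star(k+1)}=-(S-e)$; the symmetric computation yields $\id_H\star S=e$. Thus $S$ is the required two-sided convolution inverse of $\id_H$, and $(H,m,u,\Delta,\vep,S)$ is a Hopf algebra.

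I expect the main obstacle to be the bookkeeping in the second paragraph: carefully justifying that the degree-zero tensor components of $\Delta$ on $H_{(n)}$ are exactly $1\ot x$ and $x\ot 1$, so that $\tilde\Delta$ strictly lowers the filtration degree in both factors. This is precisely where connectedness $H_{(0)}=\bfk$ and the modular decomposition $H_{(n)}=\im u\oplus(H_{(n)}\cap\ker\vep)$ are indispensable; once that is secured, the convergence of the geometric series and the two telescoping identities are routine formal manipulations in the convolution algebra.
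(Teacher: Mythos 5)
Your proposal is correct, and it is essentially the same argument as in the paper's source: the paper does not prove this lemma itself but imports it from \cite[Theorem 3.4]{GG}, where the antipode is obtained exactly as you do, via conilpotence of the reduced coproduct on a connected cofiltered coalgebra and the locally finite convolution-geometric series $S=\sum_{k\geq 0}(-1)^k\phi^{\star k}$. The only step worth tightening is the one you flagged: since decompositions in $\sum_{p+q=n}H_{(p)}\ot H_{(q)}$ are not unique, ``the $p=0$ component is pinned down to be $1\ot x$'' is not literally meaningful; the clean route is the identity $(\phi\ot\phi)\Delta=\tilde\Delta\phi$ on $\ker\vep$ together with the facts that $\phi$ preserves the filtration and annihilates $H_{(0)}=\bfk$, which immediately yields $\tilde\Delta\big(H_{(n)}\cap\ker\vep\big)\subseteq\sum_{p+q=n,\,p,q\geq 1}H_{(p)}\ot H_{(q)}$ and hence the local nilpotence you need.
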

We now obtain the main result of this section.

\begin{theorem}\mlabel{thm:hrth}
 The quintuple $(\hrts,\mul,1,\col,\epl)$, equipped with the filtration given in Eq.~\meqref{eq:cofil}, is a connected cofiltered coaugmented coalgebra and consequently a Hopf algebra.
\end{theorem}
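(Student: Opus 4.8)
The plan is to apply Lemma~\mref{lem:filterhopf}. Since Theorem~\mref{thm:bia} already establishes that $(\hrts,\mul,1,\col,\epl)$ is a unitary bialgebra, it suffices to verify that $(\hrts,\col,\epl)$, together with the unit map $u:\bfk\to\hrts$, $1_\bfk\mapsto\etree$, and the filtration $\{\hfilt\}_{n\geq0}$ of Eq.~\meqref{eq:cofil}, is a connected cofiltered coaugmented coalgebra in the sense of Definition~\mref{def:fil}. Most of the required conditions are immediate: the coaugmentation identity $\epl u=\id_\bfk$ holds since $\epl(\etree)=1_\bfk$ by Eq.~\meqref{eq:dele2}; the filtration $\hfilt=\bigoplus_{k\leq n}H_{{\rm RT}}^{(k)}$ is exhaustive and increasing by construction with $\bigcup_{n\geq0}\hfilt=\hrts$; and, as $\etree$ is the unique forest of degree $0$, one has $H_{{\rm RT}}^{(0)}=\bfk\etree=\im\,u$, so that $\im\,u\subseteq\hfilt$ for all $n$ while $H_{{\rm RT},(0)}=\im\,u$ gives connectedness.

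The essential point --- and the reason we pass from the grading to the associated filtration, the grading itself failing condition $(c)$ --- is the cofiltration compatibility
\begin{equation*}
\col(\hfilt)\subseteq\sum_{p+q=n}H_{{\rm RT},(p)}\otimes H_{{\rm RT},(q)},\quad n\geq0.
\end{equation*}
I would derive this from the combinatorial formula of Theorem~\mref{thm:com}, namely $\col(F)=\sum_{G\in\calf_F}G\ot\widetilde{F/G}$. The key observation is that the leaf operator $\leafsp$ is \emph{degree non-increasing}: replacing a leaf $\sbullet_x$ by $\sbullet_x+\mu_x$ (respectively $\sbullet_\omega$ by $\sbullet_\omega+\lambda_\omega$) either preserves the vertex count or lowers it by one, since each scalar summand yields a forest of strictly smaller degree. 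Hence every monomial occurring in $\widetilde{F/G}$ has degree at most $|F/G|$. Together with the identity $|G|+|F/G|=|F|$ valid for every subforest $G$ of $F$, this shows that each term $G\ot(\text{monomial of }\widetilde{F/G})$ of $\col(F)$ has bidegree $(|G|,d)$ with $|G|+d\leq|F|$, so that $\col(F)\in\sum_{p'+q'\leq|F|}H_{{\rm RT}}^{(p')}\otimes H_{{\rm RT}}^{(q')}$ for every basis forest $F$.

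It then remains to fit this bound into the filtration. For $F$ with $|F|=m\leq n$ and a summand of bidegree $(p',q')$ with $p'+q'\leq m\leq n$, choosing $p=p'$ and $q=n-p'\geq q'$ gives $H_{{\rm RT}}^{(p')}\otimes H_{{\rm RT}}^{(q')}\subseteq H_{{\rm RT},(p)}\otimes H_{{\rm RT},(q)}$ with $p+q=n$, using $H_{{\rm RT}}^{(q')}\subseteq H_{{\rm RT},(n-p')}$; this yields the desired inclusion. All conditions of Definition~\mref{def:fil} are then satisfied, so Lemma~\mref{lem:filterhopf} applies and $\hrts$ is a Hopf algebra. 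I expect the one genuine obstacle to be the bidegree bookkeeping of the previous paragraph --- verifying that $\leafsp$ never raises degree and that the resulting bidegrees $(|G|,d)$ satisfy $|G|+d\leq|F|$; once Theorem~\mref{thm:com} is available this is the only nonroutine step, the coaugmentation and connectedness checks being direct.
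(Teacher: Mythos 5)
Your proposal is correct and follows essentially the same route as the paper's proof: both verify the coaugmentation and connectedness directly, establish the cofiltration compatibility via the combinatorial formula $\col(F)=\sum_{G\in\calf_F}G\ot\widetilde{F/G}$ of Theorem~\mref{thm:com} together with the observation that $\widetilde{F/G}$ lies in $H_{{\rm RT},(|F|-|G|)}$, and then invoke Lemma~\mref{lem:filterhopf}. Your degree bookkeeping for the leaf operator $\leafsp$ simply makes explicit the step the paper states without justification.
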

 
\begin{proof}
The definition of $\epl$ shows that $(\hrts,1,\col,\epl)$ is a coaugmented coalgebra. To prove that $(\hrts,1,\col,\epl)$ is cofiltered, take $F\in \calf^{(k)}, k\geq 0$. By the definition of $\col(F)$ from Eq.~\meqref{eq:ccd}: 
$$ \col(F)=\sum_{F'\in\calf_{F}}F'\ot\widetilde{F/F'},$$
the element $\widetilde{F/F'}$ is in $H_{{\rm RT},(k-|F'|)}$. Thus 
$\col(F)$ is in $\sum_{p+q=k}H_{{\rm RT},(p)}\ot H_{{\rm RT},(q)}.$
Therefore, 
$$ \col(H_{{\rm RT},(n)}) \subseteq \sum_{k\leq n}  \bfk \col(\calf^{(k)}) \subseteq \sum_{k\leq n} \sum_{p+q=k} H_{{\rm RT},(p)}\ot H_{{\rm RT},(q)} = \sum_{p+q=n} H_{{\rm RT},(p)}\ot H_{{\rm RT},(q)}.$$

We also observe that $H_{{\rm RT},(0)}=\bfk$. Therefore, $\hrts$ is a connected cofiltered coaugmented coalgebra. Thus $\hrts$ is a Hopf algebra by Lemma~\mref{lem:filterhopf}.
\end{proof}

\subsection{Free \texorpdfstring{$\Omega$}{Omega}-cocycle Hopf algebras of $(X, \Omega)$-decorated rooted forests}
\mlabel{ss:free}
Thanks to the degree and grafting operators defined above, we can endow $\hrts$ with additional algebraic structures through the notion of operated algebras.
\begin{defn}
\mcite{Guo09} \mlabel{de:ofil}
    Let $\Omega$ be a nonempty set.
	\begin{enumerate}
		\item
		An {\bf $\Omega$-operated algebra} is an algebra $A$ together with a family of linear operators $P_{\omega}: A\to A$, $\omega\in \Omega$.
		\item
		Let $(A,\, \Po)$ and $(A',\,\Pop)$ be  $\Omega$-operated algebras.
		A linear map $f : A\rightarrow A'$ is called an {\bf $\Omega$-operated algebra homomorphism} if $f$ is an algebra homomorphism such that $f P_\omega = P'_\omega f$ for $\omega\in \Omega$.
		\item
		A {\bf free $\Omega$-operated algebra on a set $X$} is an $\Omega$-operated algebra $(A, \Po)$ together with a set map $j_{X}: X\rightarrow A$ with the property that, for any $\Omega$-operated algebra $(A', \Pop)$ and any set map $f: X\rightarrow A'$, there is a unique homomorphism $\bar f:A\to A'$ of $\Omega$-operated algebras such that $\bar{f} j_X=f$.
\item An $\Omega$-operated algebra $(R,P_\Omega)$ with a grading $R=\oplus_{n\geq 0} R^n$ (resp. an increasing filtration $\{R_n\}_{n\geq 0}$) is called an {\bf $\Omega$-operated graded algebra} (resp. {\bf $\Omega$-operated filtered algebra}) if $(R,\oplus_{n\geq 0}R^n)$ is a graded algebra (resp. $(R,\{R_n\}_{n\geq 0})$ is a filtered algebra) and
\begin{equation*}
P_\omega (R^n)\subseteq R^{n+1} \quad
\text{(resp. } P_\omega (R_n)\subseteq R_{n+1}).
\mlabel{eq:ofil}
\end{equation*}
	\end{enumerate}
\end{defn}
Note that the grafting operator $\pl$ increases the degree of a rooted forest by one. 

\begin{lemma}\cite[Lemma 2.5]{ZGG},\mcite{ZGG16}
\mlabel{lem:ofil}
Let $X$ be a set and $\Omega$ be a nonempty set. 
\begin{enumerate}
\item The algebra $\hrts$ with the family of grafting operator $B_{\omega}^+$,\, $\omega\in\Omega$, is an $\Omega$-operated algrbra. 
\item The $\Omega$-operated algebra $\hrts$ with its grading $\hrts=\oplus_{n\geq 0}H_{{\rm RT}}^{(n)}$ and the associated filtration $\{H_{{\rm RT},(n)}:=\oplus_{k\leq n}H_{{\rm RT}}^{(k)}\}_{n\geq 0}$ is an $\Omega$-operated graded algebra and an $\Omega$-operated filtered algebra.
\item 
Let $j_{X}: X\hookrightarrow \hrts$, $x \mapsto \sbullet_{x}$ be the natural embedding and $m_{{\rm RT}}$ be the concatenation product.
The quadruple $(\hrts, \,\mul,\,1, \, \Bo)$ together with $j_X$ is the free $\Omega$-operated algebra on $X$.
\mlabel{lem:propm}
\end{enumerate}
\end{lemma}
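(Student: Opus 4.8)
The plan is to treat the three parts in order, with (1) and (2) being structural consequences of the construction and (3) carrying the real content.

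Part (1) is essentially a restatement of Definition~\ref{de:oua}: the triple $(\hrts,\mul,1)$ is a unitary algebra and each grafting operator $\pl$ is a $\bfk$-linear endomorphism of $\hrts$, so $(\hrts,\mul,1)$ together with the family $\Bo$ is an $\Omega$-operated algebra by definition. For part (2), the one numerical input is that the degree $|F|$ (number of vertices) is additive under concatenation, whence $H_{{\rm RT}}^{(p)}H_{{\rm RT}}^{(q)}\subseteq H_{{\rm RT}}^{(p+q)}$, while grafting a forest to a new root decorated by $\omega$ adds exactly one vertex, so $\pl(H_{{\rm RT}}^{(n)})\subseteq H_{{\rm RT}}^{(n+1)}$. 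The same inclusion passed to the associated filtration $\{H_{{\rm RT},(n)}\}$ gives the $\Omega$-operated graded and filtered structures simultaneously.

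The substance is part (3), the universal property of Definition~\ref{de:ofil}. The plan is to construct the required map $\bar f\colon\hrts\to A'$ explicitly on basis forests $F\in\calf_{{\rm RT}}(X,\Omega)$ by induction on $\dep(F)$ and then extend by $\bfk$-linearity. On the generators I set $\bar f(1):=1_{A'}$ and $\bar f(\sbullet_x):=f(x)$ for $x\in X$. For the inductive step I split according to breadth: if $F=T_1\cdots T_m$ with $m\geq 2$ I put $\bar f(F):=\bar f(T_1)\cdots\bar f(T_m)$, and if $F$ is a tree of positive depth I write $F=\pl(\lbar F)$ and put $\bar f(F):=P'_\omega(\bar f(\lbar F))$, which is legitimate because $\dep(\lbar F)<\dep(F)$. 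In particular $\bar f(\sbullet_\omega)=\bar f(\pl(1))=P'_\omega(1_{A'})$, so the leaves decorated by $\Omega$ are covered automatically and no separate clause is needed for them.

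It then remains to verify that $\bar f$ is an algebra homomorphism (the multiplicative clause forces $\bar f(FF')=\bar f(F)\bar f(F')$, which one checks by comparing the tree factorizations of $F$, $F'$ and $FF'$), that $\bar f\,\pl=P'_\omega\,\bar f$ (immediate from the grafting clause), and that $\bar f j_X=f$. Uniqueness is the mirror argument: any $\Omega$-operated algebra homomorphism $g$ with $g j_X=f$ must satisfy $g(1)=1_{A'}$, $g(\sbullet_x)=f(x)$, $g(FF')=g(F)g(F')$ and $g(\pl(\lbar F))=P'_\omega(g(\lbar F))$, and an induction on $\dep$ then forces $g=\bar f$ on every basis forest. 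I expect the only genuinely load-bearing point to be \emph{well-definedness} of the recursion, which requires that every nonempty forest factors uniquely as a concatenation of trees and that every tree of positive depth is uniquely of the form $\pl(\lbar F)$ with $\dep(\lbar F)$ strictly smaller. These are precisely the freeness statements built into Eq.~\eqref{eq:frtn}, presenting $\calf_{{\rm RT}}(X,\Omega)$ as the free monoid on $\calt_{{\rm RT}}(X,\Omega)$ and each positive-depth tree as a disjoint graft $\pl(\calf_{{\rm RT},n})$; once these are invoked the recursion is consistent and the remaining verifications reduce to routine bookkeeping.
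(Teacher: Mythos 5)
Your proof is correct. Note that the paper does not prove this lemma at all—it imports it from \cite[Lemma 2.5]{ZGG} and \cite{ZGG16}—and your reconstruction is exactly the standard argument used there: define $\bar f$ recursively via the unique factorization of a forest as a concatenation of trees (the free monoid structure $\calf_{\rm RT}(X,\Omega)=M(\calt_{\rm RT}(X,\Omega))$) and of each positive-depth tree as $B^+_\omega(\lbar{F})$ with $\dep(\lbar{F})$ strictly smaller (including $\sbullet_\omega=B^+_\omega(1)$, as you observe), then check multiplicativity, operator-compatibility, $\bar f j_X=f$, and uniqueness by the same induction.
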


We introduce the following generalization of various cocycle bialgebras.

\begin{defn}
\begin{enumerate}
\item An {\bf $\Omega$-operated bialgebra} is a bialgebra $(A,m,1_A,\Delta, \varepsilon)$ which is also an $\Omega$-operated algebra $(A,\,\Po)$.
\item Let $(A,\,\Po)$ and $(A',\,\Pop)$ be $\Omega$-operated bialgebras.
A linear map $f : H\rightarrow H'$ is called an {\bf $\Omega$-operated bialgebra homomorphism} if $f$ is a bialgebra homomorphism such that $f  P_\omega = P'_\omega f$ for $\omega\in \Omega$.
\item
An {\bf$\Omega$-cocycle bialgebra} is an $\Omega$-operated bialgebra $(A,m,1_A,\Delta, \varepsilon,\,\Po)$ which satisfies the Hochschild 1-cocycle condition of weight $\lambda_\Omega$:
\begin{equation}
\Delta P_\omega=P_\omega\ot 1_H +\lambda_\omega\id\ot1_H + (\id\ot P_\omega)\Delta \, \text{ for }\omega \in \Omega,
\mlabel{eq:cocycle2}
\end{equation}
where $\lambda_\Omega:=(\lambda_\omega)_{\omega\in \Omega}\subseteq \bfk$
be a set of scalars parameterized by $\Omega$.
If the bialgebra in an $\Omega$-cocycle bialgebra is a Hopf algebra, then it is called an {\bf$\Omega$-cocycle Hopf algebra}.
\item
The {\bf free $\Omega$-cocycle bialgebra on a set $X$} is an $\Omega$-cocycle bialgebra $(A_X,\, m_X,\, 1_X,\, \Delta_X,\,  \varepsilon_X,\,\\ \Po)$ together with a set map $j_X:X\to A_X$ with the property that, for any $\Omega$-cocycle bialgebra $(A,\, m,\, 1_A,\,  \Delta, \, \varepsilon,\,\Pop)$ and set map $f:X\to A$ (satisfies $\Delta(f(x))=f(x)\ot 1_A+1_A\ot f(x)+\mu_x 1_A\ot 1_A$), there is a unique homomorphism $\free{f}:A_X\to A$ of $\Omega$-operated bialgebras such that $\free{f} j_X=f$. The concept of a {\bf free $\Omega$-cocycle Hopf algebra} is defined in the same way.
\mlabel{it:def4}
\end{enumerate}
\mlabel{de:decHopf}
\end{defn}

Lemma~\mref{lem:ofil}.\meqref{lem:propm} suggests that the structure of the $\Omega$-operator algebra $\hrts$ can be extended to a bialgebra. For this purpose, we first prove a lemma.

\begin{lemma} \mlabel{lem:coidealv}
    Let $(H,\, m,\,1_H,\, \Delta,\,\varepsilon,\, \Po)$
be an $\Omega$-cocycle bialgebra. Then
$$\varepsilon P_{\omega}(h)=-\lambda_\omega\varepsilon(h),$$
for all $h\in H$ and $\omega\in \Omega$.
\end{lemma}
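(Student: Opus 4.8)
The plan is to collapse both tensor factors in the weighted cocycle condition \meqref{eq:cocycle2} by applying a counit, and then to read off the claimed identity from the counit axioms of the bialgebra $H$. Concretely, I would first evaluate \meqref{eq:cocycle2} at an element $h\in H$, giving
\[
\Delta P_\omega(h)=P_\omega(h)\ot 1_H+\lambda_\omega h\ot 1_H+(\id\ot P_\omega)\Delta(h),
\]
and then apply the linear map $\varepsilon\ot\varepsilon\colon H\ot H\to\bfk$ to both sides. The two ingredients I would use are $\varepsilon(1_H)=1_\bfk$ and the derived counit identity $(\varepsilon\ot\varepsilon)\Delta=\varepsilon$, the latter following by post-composing the counit axiom $(\id\ot\varepsilon)\Delta=\id$ with $\varepsilon$.

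Writing $\Delta(h)=\sum_{(h)}h_{(1)}\ot h_{(2)}$ in Sweedler notation, the left-hand side becomes $(\varepsilon\ot\varepsilon)\Delta P_\omega(h)=\varepsilon(P_\omega(h))$. On the right-hand side, the first term gives $\varepsilon(P_\omega(h))\varepsilon(1_H)=\varepsilon(P_\omega(h))$ and the second gives $\lambda_\omega\varepsilon(h)\varepsilon(1_H)=\lambda_\omega\varepsilon(h)$. The crucial third term is
\[
(\varepsilon\ot\varepsilon)(\id\ot P_\omega)\Delta(h)=\sum_{(h)}\varepsilon(h_{(1)})\,\varepsilon(P_\omega(h_{(2)})),
\]
which I would simplify by applying the linear functional $\varepsilon\circ P_\omega$ to the counit identity $\sum_{(h)}\varepsilon(h_{(1)})h_{(2)}=h$; this shows the third term again equals $\varepsilon(P_\omega(h))$.

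Assembling these computations yields $\varepsilon(P_\omega(h))=\varepsilon(P_\omega(h))+\lambda_\omega\varepsilon(h)+\varepsilon(P_\omega(h))$, and cancelling the common summand $\varepsilon(P_\omega(h))$ gives exactly $\varepsilon(P_\omega(h))=-\lambda_\omega\varepsilon(h)$, as claimed; since this holds for every $h$, it holds on all of $H$ by linearity. The argument is essentially bookkeeping in Sweedler notation, so I expect no genuine obstacle. The one point requiring care is the third term, where it must be recognized that the counit axiom on the first tensor slot collapses the expression down to $\varepsilon(P_\omega(h))$ rather than leaving a convolution-type sum; everything else is a formal consequence of \meqref{eq:cocycle2} and the coalgebra axioms.
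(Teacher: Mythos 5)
Your proposal is correct and is essentially identical to the paper's own proof: both apply $\varepsilon\ot\varepsilon$ to the weighted cocycle identity, use $(\varepsilon\ot\varepsilon)\Delta=\varepsilon$ on the left, collapse the third term to $\varepsilon P_\omega(h)$ via the counit axiom in the first tensor slot, and cancel to obtain $\varepsilon P_\omega(h)=-\lambda_\omega\varepsilon(h)$. No gaps.
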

\begin{proof}
Let $\omega\in\Omega$ and $h\in H$. Applying $\varepsilon=(\varepsilon\ot \varepsilon)\Delta$, we have 
\begin{align*}
\varepsilon P_{\omega}(h)&
=(\varepsilon\ot \varepsilon) \Delta(P_{\omega}(h))\\
&=(\varepsilon\ot \varepsilon)\Big(P_{\omega}(h)\ot 1_H+\lambda_\omega h\ot1_H+(\id \ot P_{\omega})\Delta(h)\Big)\quad(\text{by Eq.~\meqref{eq:cocycle2}})\\
&=(\varepsilon\ot \varepsilon)(P_{\omega}(h)\ot 1_H)+(\varepsilon\ot \varepsilon)(\lambda_\omega h \ot1_H)+(\varepsilon\ot \varepsilon P_{\omega})\Delta(h)\\
&=\varepsilon P_{\omega}(h)+\lambda_\omega\varepsilon(h)+\sum_{(h)}\varepsilon(h_{(1)})\varepsilon P_{\omega}(h_{(2)})\\
&=\varepsilon P_{\omega}(h)+\lambda_\omega\varepsilon(h)+\varepsilon P_{\omega}\bigg(\sum_{(h)}\varepsilon(h_{(1)})(h_{(2)})\bigg)\\
&=\varepsilon P_{\omega}(h)+\lambda_\omega\varepsilon(h)+\varepsilon P_{\omega}(h).
\end{align*}
Then we have $\varepsilon P_{\omega}(h)+\lambda_\omega\varepsilon(h)=0$ and $\varepsilon P_{\omega}(h)=-\lambda_\omega\varepsilon(h)$.
\end{proof}

With the above lemma, we can now prove the following theorem, which generalizes the universal properties of various cocycle Hopf algebras~\mcite{Mo,ZGG}.
\begin{theorem}
Let $j_{X}: X\hookrightarrow \hrts$, $x \mapsto \sbullet_{x}$ be the nature embedding.
\begin{enumerate}
\item
 The sextuple $(\hrts, \,\mul,\,1, \, \Delta_{{\rm RT}},\,\epl, \,\Bo)$ together with $j_X$ is the free $\Omega$-cocycle  bialgebra on $X$.  \mlabel{it:fubialg}

\item The Hopf algebra given by the connected cofiltered coaugmented bialgebra $$(\hrts,\,\mul,\,1, \, \Delta_{{\rm RT}},\,\epl, \, \Bo)$$ 
together with $j_X$ is the free $\Omega$-cocycle Hopf algebra on $X$.
 \mlabel{it:fuhopf}
\end{enumerate}
\mlabel{thm:propm}
\end{theorem}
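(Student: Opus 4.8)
The plan is to construct $\free{f}$ in two stages: first obtain it as an $\Omega$-operated algebra homomorphism, and then verify that this same map automatically respects the coalgebra structure. For part~\meqref{it:fubialg}, fix an $\Omega$-cocycle bialgebra $(A,m,1_A,\Delta,\varepsilon,\Pop)$ and a set map $f\colon X\to A$ satisfying $\Delta(f(x))=f(x)\ot 1_A+1_A\ot f(x)+\mu_x 1_A\ot 1_A$. By Lemma~\mref{lem:ofil}.\meqref{lem:propm}, the algebra $\hrts$ is the free $\Omega$-operated algebra on $X$, so there is a unique $\Omega$-operated algebra homomorphism $\free{f}\colon\hrts\to A$ with $\free{f}\,j_X=f$. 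Since this map is already forced, uniqueness is immediate; the whole content is to show that this $\free{f}$ is a bialgebra homomorphism, i.e.\ that $\varepsilon\free{f}=\epl$ and $\Delta\free{f}=(\free{f}\ot\free{f})\col$.

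Both identities I would establish by induction on $\dep(F)$ with a nested induction on $\bre(F)$, mirroring the inductive definitions of $\col$ and $\epl$ and the proofs of Lemmas~\mref{lem:coass2} and~\mref{lem:coal}. For the counit, on a generator the hypothesis on $f$ forces $\varepsilon(f(x))=-\mu_x$: applying $(\id\ot\varepsilon)$ to $\Delta(f(x))$ and using the counit axiom gives $f(x)=f(x)+\varepsilon(f(x))1_A+\mu_x 1_A$, which matches $\epl(\sbullet_x)=-\mu_x 1_\bfk$. On a grafting $B^+_\omega(\lbar{F})$ I would use $\free{f}\,B^+_\omega=P_\omega\free{f}$ together with Lemma~\mref{lem:coidealv}, which gives $\varepsilon P_\omega=-\lambda_\omega\varepsilon$, to reproduce Eq.~\meqref{eq:dbp2}; and the breadth step is immediate because $\varepsilon\free{f}$ and $\epl$ are both algebra homomorphisms (Lemma~\mref{lem:eplh}).

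For the coproduct, the generator case is a direct comparison: $\Delta\free{f}(\sbullet_x)=\Delta(f(x))$ equals $(\free{f}\ot\free{f})\col(\sbullet_x)$ by the hypothesis on $f$ and the definition of $\col(\sbullet_x)$ in Eq.~\meqref{eq:dele}, using $\free{f}(1)=1_A$. The grafting case is the crux: writing $F=B^+_\omega(\lbar{F})$, I would expand $\Delta\free{f}(F)=\Delta\big(P_\omega(\free{f}(\lbar{F}))\big)$ by the weighted cocycle condition Eq.~\meqref{eq:cocycle2} in $A$, replace $\Delta(\free{f}(\lbar{F}))$ by $(\free{f}\ot\free{f})\col(\lbar{F})$ using the depth induction hypothesis, push $P_\omega$ through $\free{f}$ via $(\id\ot P_\omega)(\free{f}\ot\free{f})=(\free{f}\ot\free{f})(\id\ot B^+_\omega)$, and compare with $(\free{f}\ot\free{f})\col(B^+_\omega(\lbar{F}))$ expanded by Eq.~\meqref{eq:dbp}. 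The point is that the three terms of the weighted cocycle condition in $A$ match, term by term, the three terms coming from the weight-$\lambda_\Omega$ condition Eq.~\meqref{eq:cdbp} satisfied by $\col$. The breadth step again follows because $\Delta\free{f}$ and $(\free{f}\ot\free{f})\col$ are both algebra homomorphisms (the latter by Lemma~\mref{lem:colh}), so they agree on a product once they agree on its factors.

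Part~\meqref{it:fuhopf} then requires no further work: $\hrts$ is a Hopf algebra by Theorem~\mref{thm:hrth}, the target is a Hopf algebra by hypothesis, and any bialgebra homomorphism between Hopf algebras automatically commutes with the antipodes, so the $\free{f}$ built above is the unique $\Omega$-cocycle Hopf algebra homomorphism extending $f$. I expect the main obstacle to be the bookkeeping in the grafting case of the coproduct induction---keeping the $P_\omega\ot 1$, $\lambda_\omega(\cdot)\ot 1$, and $(\id\ot P_\omega)\Delta$ terms aligned on the two sides---but the structural reason the argument closes is that $\free{f}$ is already pinned down as an operated algebra map, so only the coalgebra compatibilities remain, and these are governed termwise by the matching cocycle conditions Eq.~\meqref{eq:cdbp} and Eq.~\meqref{eq:cocycle2}.
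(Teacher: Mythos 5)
Your proposal is correct and follows essentially the same route as the paper: obtain $\free{f}$ uniquely from the freeness of $\hrts$ as an $\Omega$-operated algebra (Lemma~\mref{lem:ofil}), then verify compatibility with $\col$ and $\epl$ on generators via Eq.~\meqref{eq:prim}, on graftings via the matching cocycle conditions~\meqref{eq:cdbp} and~\meqref{eq:cocycle2} together with Lemma~\mref{lem:coidealv}, and on products via multiplicativity, finishing part~(b) by the antipode-compatibility of bialgebra maps between Hopf algebras. The only difference is packaging: the paper phrases the verification as showing that the sets $\mathscr{A}$ and $\mathscr{B}$ where the identities hold are $\Omega$-operated subalgebras containing $X$, whereas you run an explicit depth/breadth induction, but the case analysis and the computations are identical.
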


\begin{proof}
(\mref{it:fubialg})
By Theorem~\mref{thm:bia} and Lemma~\mref{lem:ofil}, the quintuple $(\hrts, \mul,\,1,\, \Delta_{{\rm RT}}, \epl)$ is a $\Omega$-operated bialgebra.
Additionally, by Eq.~\meqref{eq:cdbp}, the sextuple $(\hrts,
\mul,\,1,\, \Delta_{{\rm RT}},\epl, \,\Bo)$ is an $\Omega$-cocycle  bialgebra.

Fix an $\Omega$-cocycle bialgebra $(H,\, m,\,1_H,\, \Delta,\,\varepsilon,\, \Po)$ and a set map $f: X\rightarrow H$ satisfying
\begin{align}
\Delta(f(x))=f(x)\ot 1_H+\mu_x 1_H\ot1_H+1_H\ot f(x) \quad \text{for all}\ x\in X.
\mlabel{eq:prim}
\end{align}
Obviously, $(H,\, m,\, 1_H,\, \Po)$ is an $\Omega$-operated algebra.
By Lemma~\mref{lem:ofil}.\meqref{lem:propm}, there exists a unique $\Omega$-operated algebra homomorphism $\free{f}:\hrts \to H$ such that $\free{f} j_X={f}$. We only need to show that $\free{f}$ is a coalgebra homomorphism, that is for all $F\in \calf_{{\rm RT}}(X,\Omega)$, we have
\begin{align}
\Delta \free{f} (F)&=(\free{f}\ot \free{f}) \col (F), \mlabel{eq:copcomp}\\
\varepsilon \free{f} (F)&=\epl (F).
\mlabel{eq:counit}
\end{align}

To prove Eq.~\meqref{eq:copcomp}, we define the set
\begin{align*}
\mathscr{A} := \{ F\in \hrts \, | \, \Delta(\free{f}(F))=(\free{f} \ot \free{f}) \col(F) \}.
\end{align*}
By Lemma~\mref{lem:ofil}.\meqref{lem:propm}, $\hrts$ is an $\Omega$-operated algebra generated by $X$. Thus Eq.~\meqref{eq:copcomp} holds if $\mathscr{A}$ is proved to be an $\Omega$-operated subalgebra of $\hrts$ containing $X$.

Since $\free{f}$ is an $\Omega$-operated algebra homomorphism, and $\col$ and $\Delta$ are algebra homomorphisms on $\hrts$ and $H$ respectively, it follows that $1 \in \mathscr{A}$ and $\mathscr{A}$ is a subalgebra of $\hrts$. Moreover, For any generator $x \in X$, we have
\begin{align*}
\Delta(\free{f}(\sbullet_{x}))=&\ \Delta(f(x))\\
=&\ f(x) \ot 1_H +\mu_x 1_H\ot1_H+1_H \ot f(x)  \quad \text{(by Eq.~\meqref{eq:prim})}\\
=&\ \free{f} (\sbullet_{x})\ot \free{f}(1)+\mu_x\free{f}(1)\ot\free{f}(1)+\free{f}(1)\ot \free{f}(\sbullet_{x}) \\
=&\ (\free{f} \ot \free{f})(\sbullet_{x} \ot 1 +\mu_x 1\ot1+1 \ot \sbullet_{x} )\\
=&\ (\free{f} \ot \free{f})\col(\sbullet_{x}).
\end{align*}
Thus $\sbullet_{x} \in \mathscr{A}$.

Further for any $F \in \mathscr{A}$ and $\omega \in \Omega$, we have
\begin{align*}
&\ \Delta \free{f} (\pl({F}))\\
=&\ \Delta P_\omega(\free{f} ({F}))
\quad(\text{by $\bar{f}$ being an $\Omega$-operated algebra homomorphism}) \\
=&\ P_\omega(\free{f}({F}))\ot 1_{H}+\lambda_\omega\free{f}(F)\ot1_H+ (\id\ot P_\omega)\Delta(\free{f} ({F}))\quad(\text{by Eq.~\meqref{eq:cocycle2}})\\
=&\ P_\omega(\free{f}({F}))\ot 1_{H}+\lambda_\omega\free{f}(F)\ot1_H+ (\id\ot P_\omega)(\free{f}\ot \free{f}) \col ({F}) \quad \text{(by $F \in \mathscr{A}$)}\\
=&\ P_\omega(\free{f}({F}))\ot 1_{H}+\lambda_\omega\free{f}(F)\ot1_H+ (\free{f}\ot P_\omega\free{f}) \col ({F})\\
=&\ \free{f}(\pl({F}))\ot 1_{H}+\lambda_\omega\free{f}(F)\ot1_H+ (\free{f}\ot \free{f}B_{\omega}^+) \col ({F}) \\ &(\text{by $\bar{f}$ being an $\Omega$-operated algebra homomorphism}) \\
=&\ (\free{f}\ot \free{f})\Big(\pl({F})\ot 1+\lambda_\omega F\ot1+(\id\ot B_{\omega}^+)\col ({F})\Big) \\
=&\ (\free{f}\ot \free{f}) \col (B_{\omega}^+({F})).
\end{align*}
Thus $\mathscr{A}$ is closed under  $\pl$ for $\omega \in \Omega$ and so $\mathscr{A}=\hrts$.

Similarly, to verify Eq.~\meqref{eq:counit}, we define the subset
\begin{align*}
\mathscr{B}:=\{ F \in \hrts \mid \, \varepsilon(\free{f}(F))= \varepsilon_{\mathrm{{\rm RT}}}(F)\} \subseteq \hrts,
\end{align*}
and verify that it is an $\Omega$-operated subalgebra of $\hrts$ containing $X$.

Since $\free{f}$ is an $\Omega$-operated algebra homomorphism, $\varepsilon_{\mathrm{RF}}$ and $\varepsilon$ are algebra homomorphisms on $\hrts$ and $H$ respectively. So we get $1 \in \mathscr{B}$ and $\mathscr{B}$ is a subalgebra of $\hrts$. For any generator $x \in X$, $f(x)$ satisfies Eq.~\meqref{eq:prim}.
Hence $\varepsilon(f(x))=-\mu_x 1_k$. Then by Eq.~\meqref{eq:dele2}, we have
\begin{align*}
\varepsilon(\free{f}(\sbullet_{x}))= \varepsilon(f(x))=-\mu_x 1_k=\varepsilon_{\mathrm{{\rm RT}}}(\sbullet_{x}).
\end{align*}
Thus $\sbullet_{x} \in \mathscr{B}$. For any $F \in \mathscr{B}$ and $\omega \in \Omega$, we have
\begin{align*}
\varepsilon(\free{f}(\pl(F)))&\ = \varepsilon(P_{\omega}(\free{f}(F))) \quad \text{(by $\free{f}$ being an $\Omega$-operated algebra homomorphism)}\\
&\ =-\lambda_\omega\varepsilon(\free{f}(F))\quad\text{(by Lemma ~\mref{lem:coidealv})}\\
&\ =-\lambda_\omega\epl(F)\quad\text{(by $F\in\mathscr{B}$)}\\
&\ =\varepsilon_{\mathrm{{\rm RT}}}(\pl(F)).
\end{align*}
Hence $\mathscr{B}$ is closed under $\pl$ for $\omega \in \Omega$ and so $\mathscr{B}= \hrts$. This completes the proof.

(\mref{it:fuhopf})
The proof follows from Item~(\mref{it:fubialg}) and the well-known fact that
any bialgebra homomorphism between two Hopf algebras is compatible with the antipodes~\cite[Lemma~4.04]{Sw}.
\end{proof}

\noindent
{\bf Acknowledgments.} This research is supported by the National Natural Science Foundation of China (Grant No.12101316 and 12301025).

\noindent
{\bf Declaration of interests.} The authors have no conflicts of interest to disclose.

\noindent
{\bf Data availability.} Data sharing is not applicable as no new data were created or analyzed.

\end{document}